\def\COH{\mathsf{COH}}
\def\RCA{\mathsf{RCA}_0}
\def\RCAstar{\mathsf{RCA}^*_0}
\def\WKL{\mathsf{WKL}}
\def\Deltacomp{\Delta^0_1\mbox{-}\mathrm{CA}}
\def\CD{\mathcal{D}}
\def\CI{\mathcal{I}}
\def\CM{\mathcal{M}}
\def\CN{\mathcal{N}}
\def\CP{\mathcal{P}}
\def\CS{\mathcal{S}}
\def\ba{\mathbf{a}}
\def\bb{\mathbf{b}}
\def\bc{\mathbf{c}}
\def\bp{\mathbf{p}}
\def\bx{\mathbf{x}}
\def\by{\mathbf{y}}
\def\bzero{\mathbf{0}}
\def\halts{\downarrow}
\def\nohalts{\uparrow}
\def\rest{\upharpoonright}
\def\leqT{\leq_T}
\def\proves{\vdash}
\def\ISz{\mathrm{I}\Sigma^0_0}
\def\BSo{\mathrm{B}\Sigma^0_1}
\def\ISo{\mathrm{I}\Sigma^0_1}
\def\BSt{\mathrm{B}\Sigma^0_2}
\def\ISt{\mathrm{I}\Sigma^0_2}
\def\BSn{\mathrm{B}\Sigma^0_n}
\def\ISn{\mathrm{I}\Sigma^0_n}
\def\BSnp{\mathrm{B}\Sigma^0_{n+1}}
\def\ISnp{\mathrm{I}\Sigma^0_{n+1}}
\def\IDo{\mathrm{I}\Delta^0_1}
\def\IDt{\mathrm{I}\Delta^0_2}
\def\IDn{\mathrm{I}\Delta^0_n}
\def\IDnp{\mathrm{I}\Delta^0_{n+1}}
\newtheorem{theorem}{Theorem}
\newtheorem*{theorem*}{Theorem}
\newtheorem{axiom}{Axiom}[section]
\newtheorem{corollary}[axiom]{Corollary}
\newtheorem{lemma}[axiom]{Lemma}
\newtheorem{proposition}[axiom]{Proposition}
\newtheorem*{proposition*}{Proposition}
\newtheorem{question}[axiom]{Question}
\theoremstyle{definition}
\newtheorem*{remark}{Remark}
\begin{document}

\title{Conservation theorems for the Cohesiveness Principle}

\author{David R.\ Belanger \\ National University of Singapore and Ghent University}
\date{This version 2022}

\maketitle

\abstract{We prove that the Cohesiveness Principle (COH) is $\Pi^1_1$ conservative over $\RCA + \ISn$ and over $\RCA + \BSn$ for all $n \geq 2$ by recursion-theoretic means. We first characterize $\COH$ over $\RCA + \BSt$ as a `jumped' version of Weak K\"{o}nig's Lemma (WKL) and develop suitable machinery including a version of the Friedberg jump-inversion theorem. The main theorem is obtained when we combine these with known results about WKL. In an appendix we give a proof of the $\Pi^1_1$ conservativity of WKL over $\RCA$ by way of the Superlow Basis Theorem and a new proof of a recent jump-inversion theorem of Towsner.

\section{Introduction}

 The \emph{Cohesiveness Principle} (COH) is a combinatorial statement saying that there exist sets which are infinite, but which cannot easily be split into two infinite subsets. To be precise:

 \begin{description}
  \item [COH:]
  If $\vec R = \langle R_0,R_1,\ldots,\rangle$ is a sequence of sets of natural numbers then there is an infinite set $C$, called an \emph{$R$-cohesive set}, such that for every $k$, either all but finitely many $x \in C$ are in $R_k$, or all but finitely many $x \in C$ are in its complement $\overline{R_k}$.
 \end{description}

This principle arises naturally in the study of Ramsey's theorem for pairs: in a series of papers beginning with Cholak, Jockusch, and Slaman \cite{CJS} and ending (some might say) in Mileti's thesis \cite{Mileti:thesis}, Ramsey's theorem has been shown to decompose into the reverse-mathematical elements $B\Sigma_2$ (an induction axiom), $\COH$, and $D^2_2$ (a pigeonhole principle). Although these results are quite recent, the Cohesiveness Principle itself can be found in earlier work in recursion theory, namely that relating to the \emph{cohesive}, the \emph{r-cohesive}, and the \emph{p-cohesive} sets studied as part of Post's Programme. While the connection between reverse mathematics and recursion theory is well-known, it is used mainly for the construction of counterexamples, or used in a noneffective way (such as carrying out a recursion-theoretic forcing `outside the model'). We are interested in \emph{positive} constructions that preserve effective content wherever possible. As a first example, consider the following theorem which is suggested by known facts about the Turing degrees of $p$-cohesive sets.

\begin{theorem*}
 COH is equivalent over the axiom system $\RCA + \BSt$ to the statement, Every infinite $\Delta^0_2$ binary tree has an infinite $\Delta^0_2$ path. (Here $\Delta^0_2$ means the pointwise limit of a sequence of trees or paths.)
\end{theorem*}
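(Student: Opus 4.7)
I plan to prove the two directions of the equivalence separately, over the base theory $\RCA + \BSt$.

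\emph{$\Delta^0_2$-WKL $\Rightarrow \COH$.} Given a sequence $\vec R = (R_k)_{k \in \omega}$, I define the computable binary tree $T$ consisting of the $\sigma \in 2^{<\omega}$ with $\bigl|\bigcap_{j<i} R_j^{\sigma(j)}\bigr| \geq i$ for every $i \leq |\sigma|$ (using $R^1_k = R_k$, $R^0_k = \overline{R_k}$). The partition $\omega = \bigsqcup_{\sigma \in 2^n} \bigcap_{j<n} R_j^{\sigma(j)}$ has at least one infinite cell at each level $n$: if all $2^n$ cells were finite, $\BSt$ in its bounded $\Sigma^0_2$-collection form would give a common bound, forcing $\omega$ to be finite. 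So $T$ is an infinite $\Delta^0_1$ tree, in particular $\Delta^0_2$. Applying $\Delta^0_2$-WKL yields a $\Delta^0_2$ path $\vec b$; the lower-bound condition together with the nesting $\bigcap_{j<n+1}\subseteq \bigcap_{j<n}$ forces each $\bigcap_{j<n} R_j^{b_j}$ to be genuinely infinite. I then build a cohesive $C = \{c_0 < c_1 < \cdots\}$ by letting $c_n$ be the least $x > c_{n-1}$ in $\bigcap_{j\leq n} R_j^{b_j}$; this search provably terminates and $C$ is $\Delta^0_1$ in $\vec b$.

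\emph{$\COH \Rightarrow \Delta^0_2$-WKL.} Let $T = \lim_s T_s$ be an infinite $\Delta^0_2$ binary tree. By replacing $T_s$ with $T_s \cap 2^{\leq s}$ I may assume each $T_s$ is finite; set $h_s = \max\{|\tau| : \tau \in T_s\}$, a computable quantity in $s$. For each $\sigma \in 2^{<\omega}$ define the computable set
\[
R_\sigma = \{s : \sigma \in T_s \text{ and } \sigma \text{ has an extension of length } h_s \text{ in } T_s\}.
\]
Apply COH to get a cohesive $C$ and put $b_\sigma = 1$ iff $C \subseteq^* R_\sigma$; the function $\sigma \mapsto b_\sigma$ is uniformly $\Delta^0_2$-in-$C$. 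I verify three properties: (i) $R_\varepsilon$ is cofinite (every nonempty $T_s$ has a node of length $h_s$ extending $\varepsilon$), so $b_\varepsilon = 1$; (ii) the tree identity $R_\sigma = R_{\sigma 0} \cup R_{\sigma 1}$ (a deepest node of $T_s$ extending $\sigma$ passes through exactly one immediate child) combined with the cohesiveness of $C$ forces $b_\sigma = 1 \Rightarrow b_{\sigma 0} = 1 \vee b_{\sigma 1} = 1$; (iii) $b_\sigma = 1$ implies $\{s : \sigma \in T_s\} \supseteq R_\sigma$ is cofinite, hence $\sigma \in T$. The greedy path $P$ defined by $P(n) = 0$ if $b_{P \rest n \cdot 0} = 1$, else $1$, is thus $\Delta^0_2$-in-$C$, hence $\Delta^0_2$, and every initial segment of $P$ lies in $T$.

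The role of $\BSt$ is most evident in the first direction, supplying the bounded-$\Sigma^0_2$ pigeonhole on the finitely many cells of each partition. In the second direction the argument is essentially carried out in $\RCA$: the infinitude of $T$ and pointwise convergence $T_s \to T$ give $h_s \to \infty$, and the rest follows by cohesiveness. The main design decision is to use the dynamic depth $h_s$ rather than a static bound like $s$ in the definition of $R_\sigma$; this is what makes $R_\varepsilon$ automatically cofinite, keeps $b_\sigma = 1$ aligned with $T$-membership, and sidesteps the apparent $\Pi^0_3$-complexity of the extendible subtree that one meets with the more naive choice $R_{\sigma,n} = \{s : \sigma \text{ has extension of length } n \text{ in } T_s\}$.
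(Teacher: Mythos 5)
Your second direction ($\COH \Rightarrow \Delta^0_2$-WKL) is a genuine alternative to the paper's argument: the paper factors through the $\Sigma^0_2$ separation principle (first $\COH \Rightarrow \Sigma^0_2$-$\mathrm{Sep}$, then $\Sigma^0_2$-$\mathrm{Sep} \Rightarrow \Delta^0_2$-WKL, following Friedman--Simpson--Smith), whereas you build the path directly from the cohesive set, using the dynamic-depth sets $R_\sigma$ to sidestep the complexity blowup of ``extendible in $T$.'' This is clean and shorter; what the paper's detour buys is the separation characterization, which is the form Theorem \ref{thm:3} actually records and reuses. The place you lean on $\BSt$ there (bounded conjunctions and $\Delta^0_2$ induction to verify the greedy path is total) matches the paper's use of $\IDt$.

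Your first direction, however, has a gap at the very end. You define $c_n$ to be the least $x > c_{n-1}$ in $\bigcap_{j \leq n} R_j^{b_j}$ and conclude ``$C$ is $\Delta^0_1$ in $\vec b$.'' But $\vec b$ is only a $\Delta^0_2$ path --- a pointwise limit $b_j = \lim_s f(j,s)$ of a $\Delta^0_1$ approximation $f$ --- and is in general not a set of the model. So $\Delta^0_1$-in-$\vec b$ is $\Delta^0_2$, and $\COH$ asks for a cohesive set that is genuinely $\Delta^0_1$ (i.e.\ in $\CS$). The fix, which is what the paper does, is to refer only to the approximation, never to the limit: take $c_\ell$ to be the least $s$ with $c_k < s$ for all $k < \ell$ and $(\forall k < \ell)\bigl[R_k(s) \leftrightarrow f(k,s) = 1\bigr]$. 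This search is $\Delta^0_1$, so $C \in \CS$; existence of such $s$ needs $\BSt$ (to bound, uniformly over $k < \ell$, the stage past which $f(k,\cdot)$ has settled, a $\Sigma^0_2$ matter); totality of the recursion needs $\ISo$; and cohesiveness holds because $f(k,c_\ell) = b_k$ for all large $\ell$, so $c_\ell \in R_k^{b_k}$ eventually. As written, your version produces the right set but does not justify its membership in the model. (A side remark: the pigeonhole for infinitude of $T$ needs only $\BSo$, since ``this cell is finite'' is a $\Sigma^0_1$ condition; invoking $\BSt$ is harmless but stronger than necessary.)
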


We state an expanded version of this as Theorem \ref{thm:3}, and prove it in Section \ref{sec:4}. Thus, at least in the presence of $\BSt$, the Cohesiveness Principle is a cousin to Weak K\"{o}nig's Lemma ($\WKL$), a principle which makes the same claim but for individual trees and paths rather than for limiting approximations to them. In this article we show several ways the recursion-theoretic outlook can provide positive results such as the above in the setting of second-order arithmetic. Naturally, we must develop a certain amount of machinery to translate our intuition from computability to combinatorics. Most of the effort will be towards our Main Theorem, a family of conservation results that we present below as Theorem \ref{thm:1}. In the Appendix we use similar methods to give alternative proofs of several known results, including conservation for $\WKL$ using the Superlow Basis Theorem (Appendix \ref{app:A}), and a new proof a recent jump-inversion theorem of Towsner (Appendix \ref{app:B}).

An axiom $\phi$ is called \emph{$\Pi^1_1$ conservative over} an axiom system $\Gamma$ if every $\Pi^1_1$ sentence provable from $\Gamma \cup \{\phi\}$ is provable from $\Gamma$ alone. We can similarly define \emph{conservative over} with an axiom system $\Phi$ is place of a single $\phi$, or with another class of sentences in place of $\Pi^1_1$. Alongside proof-theoretic ordinals, consistency strength, and a few other instruments, conservation theorems give a way to measure the closeness or difference in power between one axiom system and the next. Conservation theorems for $\Pi^1_1$ sentences are popular in reverse mathematics, partly because all arithmetical sentences are $\Pi^1_1$, and partly because quite often the proof adapts a straightforward and well-known template (as will be the case in this article). We will prove the following.

\begin{theorem} \label{thm:1}
 For any $n \geq 2$, $\COH$ is $\Pi^1_1$ conservative both over $\RCA + \ISn$ and over $\RCA + \BSn$.
\end{theorem}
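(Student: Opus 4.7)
The plan is to follow the standard model-theoretic template for $\Pi^1_1$ conservation. Assume toward contradiction that some $\Pi^1_1$ sentence $\forall X\,\psi(X)$ is provable from $\RCA + \BSn + \COH$ (the $\ISn$ case being parallel) yet fails in some countable model $(M,\CS) \models \RCA + \BSn$; fix a witness $A \in \CS$ to $\neg\psi(A)$. It suffices to extend the second-order part to some $\CS' \supseteq \CS$, over the same first-order $M$, with $(M,\CS') \models \RCA + \BSn + \COH$: since $\psi$ is arithmetic and $A$ lies in the expanded second-order part, $\neg\psi(A)$ persists, contradicting the assumed provability.

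The construction of $\CS'$ will be a standard bookkeeping iteration. Starting from $\CS_0 = \CS$, list the sequences $\vec R$ that are to appear in the eventual $\CS'$ and at each stage $i$ produce a cohesive set for the currently scheduled $\vec R \in \CS_i$, adjoining it and closing under Turing reducibility and finite joins to obtain $\CS_{i+1}$; set $\CS' = \bigcup_i \CS_i$. The delicate issue is to pick the new cohesive set so that $\BSn$ (or $\ISn$) is not broken. Here I exploit the characterization of $\COH$ displayed just above: an $\vec R$-cohesive set corresponds to a $\Delta^0_2$ path through a naturally associated infinite $\Delta^0_2$ binary tree $T_{\vec R}$, so each step reduces to a ``jumped'' instance of $\WKL$.

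At each stage I appeal to the Friedberg-style jump-inversion theorem announced in the introduction: it delivers a set $C$ whose jump $C'$ is prescribed (a) to compute a path through $T_{\vec R}$, hence the desired cohesive set, and (b) to remain close to the jumps already realized in $\CS_i$ in the precise sense needed to preserve $\BSn$. This is the jumped analogue of the Superlow Basis Theorem's role in the standard $\WKL$-conservation proof over $\RCA + \BSn$: there one keeps $C'$ sharply controlled over $\emptyset'$; here one controls $C''$ sharply over the double jump of the parameters at hand.

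The main obstacle is to make this jump-control quantitative and uniform enough to survive the full iteration. Any alleged $\Sigma^0_n$-instance of $\BSn$ failing in $(M,\CS')$ must already be visible at some finite stage $\CS_i$; the jump prescription must be set up so that such a violation pulls back to an instance present in the previous stage, and ultimately in $(M,\CS)$, contradicting $(M,\CS) \models \BSn$. Running the argument in parallel for $\ISn$, which is preserved under the same kind of low-style extensions, delivers both conservation statements at once.
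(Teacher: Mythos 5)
Your proposal follows the paper's proof in all essentials: start from a countable topped model, iterate over $\omega$ stages, use Theorem~\ref{thm:3} to reduce $\COH$ to the statement that every infinite $\Delta^0_2$ binary tree has an infinite $\Delta^0_2$ path, and at each stage combine a basis-type argument with Friedberg jump inversion to adjoin such a path while preserving $\ISn$ or $\BSn$. The one point to articulate more sharply is that these are two \emph{separate} ingredients applied in sequence: first pass to $\CM[A']$, where the tree is $\Delta^0_1$ (invoking Post's theorem, Proposition~\ref{prop:2.4}, to see $\CM[A']$ retains the relevant induction), then apply the relativized $\WKL$ conservation lemma (Simpson--Smith for $\BSn$, Hajek for $\ISn$) to choose a path $P$ with $\CM[A'][P]$ still satisfying that induction, and only \emph{then} apply the Friedberg jump inversion (Lemma~\ref{lem:2.6} / Proposition~\ref{prop:FriedbergFull}) to produce a new top $B$ with $(A\oplus B)'$ $\Delta_1$-interreducible with $A'\oplus P$ --- the jump inversion relocates a path already chosen with controlled jump, it does not itself select or control the path, so your phrasing that it ``delivers a set $C$ whose jump $C'$ is prescribed $\ldots$ to compute a path'' merges two steps that the paper (and any correct proof) must keep distinct.
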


The $\ISt$ and $\BSt$ cases have been shown previously by Cholak, Jockusch, and Slaman \cite[Thm 10.2]{CJS} and by Chong, Slaman and Yang \cite[Cor 3.1]{CSY:2012}, respectively. Our methods are different from those in both \cite{CJS} and \cite{CSY:2012}. An advantage of the present proof over than in \cite{CSY:2012} is that it does not require a separate argument for models of $\neg \ISt$. The cases from $n = 3$ upward are new. Although Cholak, Jockusch and Slaman have also shown in \cite[Thm 9.1]{CJS} that $\COH$ is $\Pi^1_1$ conservative over $\RCA$ ($+\ISo$), our method does not apply to this case and so we do not include it in the Theorem.

Our proof, at its core, follows a very standard template: To show that $\phi$ is $\Pi^1_1$ conservative over $\Gamma$, find a way to expand a model of $\Gamma$ to a model of $\Gamma \cup \{\phi\}$ without changing its first-order part. Then no new $\Pi^1_1$ sentences are true in the new model, and the theorem is proved. Our argument is somewhat unusal in that we take an explicitly recursion-theoretic view of the process. That is, we look at a model $\CM$ as consisting of sets which are $\Delta^0_1$ (specifically, relative to themselves) and try to handle sets outside of the model using recursion-theoretic shortcuts such as Post's theorem or the low basis theorem. In Section \ref{sec:2} we develop enough degree theory in the context of $\RCA$ to prove our main theorem, and in the Appendix we develop some of the theory of the $\omega$-r.e.\ degrees.

A second interesting feature of our argument is its (in places) essential use of the weak axiom system $\RCAstar$. This system was introduced by Simpson and Smith in \cite{SS:1986:FOP}, and developed by Enayat, Hatzikiriakou, Simpson, Wong, Yokoyama, and others as a second base system to use alongside or in place of $\RCA$. However, we do not know of other work where $\RCAstar$ was used to prove an equivalence over $\RCA$. \footnote{Between the time this article was written and the time it was placed on the Arxiv, some progress has been made in this direction, for example in work of Ko\l odziejczyk, Kowalik and Yokoyama \cite{KKY:2021:HSI}.}

\subsection{The plan}
We begin with the intuition behind the proof of Theorem \ref{thm:1}. Due to the amount of notation involved, we give many of the definitions only afterwards, in Section \ref{sec:1.2}. We restrict ourselves for the time being to the $\BSt$ case of the theorem, the others being similar. Our starting point is the following characterization of $\COH$ for $\omega$-models, which is implicit in the work of Jockusch and Stephan on $p$-cohesive Turing degrees. (As usual, an \emph{$\omega$-model} is a model $\CM = (M,\CS)$ in which the first-order part $M$ is equal to the set $\omega$ of natural numbers.)

\begin{theorem}[Essentially Jockusch and Stephan \cite{JS:93:ACS}] \label{thm:2}
A nonempty ideal $\CI$ in the Turing degrees is an $\omega$-model of $\COH$ if and only if for each $\ba \in \CI$ there is a $\bb \in \CI$ such that $\bb'$ is PA relative to $\ba'$. 
\end{theorem}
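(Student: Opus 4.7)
The statement is essentially the Jockusch--Stephan characterization of $p$-cohesive Turing degrees \cite{JS:93:ACS}, relativized to an arbitrary oracle and rephrased in the language of Turing ideals. The plan is to invoke the relativized form --- for any set $A$ and Turing degree $\mathbf{d}$, there is an $A$-$p$-cohesive set of degree $\mathbf{d}$ if and only if $(\mathbf{d} \vee \deg A)' \gg A'$ --- and derive both directions of the biconditional from it, using only the fact that $\CI$ is closed under join and under $\leqT$.

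For the forward direction, let $\CI$ be an $\omega$-model of $\COH$ and fix $\ba \in \CI$. I would choose a uniformly $\ba$-computable sequence $\vec R = \langle R_n \rangle$ exhausting the $\ba$-r.e.\ sets up to finite difference, for example $R_{\langle e,s \rangle} = W^{\ba}_{e,s}$. Then $\vec R \in \CI$, so $\COH$ applied inside $\CI$ yields an $\vec R$-cohesive $C \in \CI$; set $\bb = \deg(C \oplus \ba) \in \CI$. Cohesiveness of $C$ implies that $\bb'$ uniformly decides, for each $e$, whether $C \subseteq^* W^{\ba}_e$ or $C \subseteq^* \overline{W^{\ba}_e}$, and the Jockusch--Stephan argument converts these choices into a $\bb'$-computable separating set for any prescribed pair of disjoint $\ba'$-r.e.\ sets, giving $\bb' \gg \ba'$.

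For the reverse direction, given any sequence $\vec R \in \CI$, let $\ba = \deg(\vec R) \in \CI$ and invoke the hypothesis to pick $\bb \in \CI$ with $\bb' \gg \ba'$. Since $(\bb \vee \ba)' \geq \bb' \gg \ba'$, relativized Jockusch--Stephan provides an $\ba$-$p$-cohesive set $C$ of degree exactly $\bb$; in particular $C$ is $\vec R$-cohesive, and $C \leqT \bb \in \CI$ places it in $\CI$. The main obstacle is the Jockusch--Stephan characterization itself: on the forward side, converting cohesive choices into a separator for two $\ba'$-r.e.\ sets uses a translation between $\Sigma^0_2(\ba)$ conditions and cohesive membership in elements of $\vec R$; on the reverse side, constructing a cohesive set of degree $\bb$ from $\bb' \gg \ba'$ relies on viewing coherent systems of commitments (``cofinally inside $W^{\ba}_e$ or cofinally outside'') as branches of an $\ba'$-computable binary tree, a branch of which is located by any jump PA over $\ba'$, with bookkeeping ensuring the cohesive set itself is $\bb$-computable rather than merely $\bb'$-computable. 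Both ingredients appear in \cite{JS:93:ACS} modulo relativization, accounting for the ``Essentially'' in the statement.
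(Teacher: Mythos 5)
The paper does not actually prove Theorem~\ref{thm:2}; it only remarks that the statement follows from a relativized Jockusch--Stephan \cite{JS:93:ACS} Theorem~2.1 together with the characterization of $\COH$ via relatively $p$-cohesive degrees. Your proposal spells out precisely that derivation, and the overall structure (apply $\COH$ to one canonical $\ba$-computable instance for the forward direction; invoke the converse of the relativized characterization and closure of $\CI$ under join and $\leqT$ for the reverse direction) is the intended one. However, one concrete step in the forward direction does not work as written. Your example sequence $R_{\langle e,s\rangle} = W^{\ba}_{e,s}$ consists of finite sets, so \emph{every} infinite set is cohesive for this $\vec R$; applying $\COH$ to it gives no information whatever about $(C \oplus \ba)'$. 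More broadly, there is no uniformly $\ba$-computable sequence whose cohesive sets are cohesive for all $\ba$-r.e.\ sets, and you do not need one: an $\vec R$-cohesive set for a uniformly $\ba$-computable $\vec R$ is only guaranteed to be $\ba$-$r$-cohesive, and the $r$-cohesive degrees (which coincide with the $p$-cohesive degrees by Jockusch--Stephan) form a larger class than the cohesive degrees. Since it is exactly the $p$-cohesive case that Theorem~2.1 of \cite{JS:93:ACS} characterizes, the fix is to take $\vec R$ to be a uniformly $\ba$-computable listing of the $\ba$-primitive-recursive $\{0,1\}$-valued functions, so that $\vec R$-cohesiveness coincides with $\ba$-$p$-cohesiveness; then the forward direction of the relativized Theorem~2.1 gives $(C \oplus \ba)' \gg \ba'$ as you wanted. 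Your reverse direction is fine; you should just add the one-line observation that once $\ba$ is taken to be the degree of the single set coding $\vec R$, each $R_i$ is $\ba$-primitive-recursive, so the $\ba$-$p$-cohesive $C$ you produce is indeed $\vec R$-cohesive.
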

(Here an \emph{ideal} means a collection of Turing degrees closed under the join operation and downward-closed in the usual ordering $\leqT$; $\ba'$ is the Turing jump of $\ba$; and for $\by$ to be PA relative to $\bx$ means $\by$ computes an element of every nonempty $\Pi^0_1(\bx)$ class.) It is simple to derive this theorem from a relativized version of \cite[Thm 2.1]{JS:93:ACS} and the well-known characterization of $\COH$ in terms of relatively $p$-cohesive degrees. Theorem \ref{thm:2} gives a step-by-step recipe to construct $\omega$-models of $\COH$:

\begin{enumerate}
 \item Start with a principal ideal $\CD(\leq \ba)$.
 \item Find a degree $\bp$ that is PA over $\ba'$.
 \item Find a degree $\bb \geq \ba$ such that $\bb' = \bp$. (Such a $\bb$ exists by the Friedberg jump theorem.)
 \item Extend to the principal ideal $\CD(\leq \bb)$, and repeat.
\end{enumerate}

This extends readily with requirements such as cone avoidance to produce, for instance, $\omega$-models of $\COH + \neg \WKL$. We would like to adapt the recipe to work for non-$\omega$-models. Before us are several hurdles, the first of which is a need for a non-$\omega$-model version of Theorem \ref{thm:2}. This need is met by the following, which draws also from work of Friedman, Simpson, and Smith on the equivalence between $\WKL$ and the $\Sigma^0_1$ separation principle.

\begin{theorem}[After Jockusch--Stephan \cite{JS:93:ACS} and Friedman--Simpson--Smith \cite{FSS:1983:CAA}] \label{thm:3}
 Over $\RCA + \BSt$, the following are equivalent:
 \begin{enumerate}
  \item[\rm (i)] $\COH$.
  \item[\rm (ii)] The $\Sigma^0_2$ separation principle: If $A_0,A_1$ are disjoint $\Sigma^0_2$ sets of natural numbers, there is a $\Delta^0_2$ set $D$ such that $A_0 \subseteq D \subseteq \overline{A_1}$. (Here $\overline{A_1}$ denotes the complement of $A_1$.)
  \item[\rm (iii)] Every infinite $\Delta^0_2$ binary tree has an infinite $\Delta^0_2$ path.
 \end{enumerate}
\end{theorem}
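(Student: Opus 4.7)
My strategy is to prove the theorem as two separate equivalences: (ii) $\Leftrightarrow$ (iii) as the second-order ``lift'' of the Friedman--Simpson--Smith theorem $\WKL \Leftrightarrow \Sigma^0_1$-separation, and (i) $\Leftrightarrow$ (iii) as the corresponding lift of the Jockusch--Stephan characterization of p-cohesive degrees. In both cases a classical recursion-theoretic argument is transferred to second-order arithmetic one jump level up, with $\BSt$ playing the role that $\BSo$ (provable in $\RCA$) plays in the classical proof. ``$\Delta^0_2$'' is treated throughout as in the theorem statement: the pointwise limit of an explicit sequence of $\Delta^0_1$ trees or paths.

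For (iii) $\Rightarrow$ (ii), given disjoint $\Sigma^0_2$ sets $A_0, A_1$ with Shoenfield-limit $\Delta^0_1$ approximations $\{A_{i,s}\}$ (so $n \in A_i$ iff $n \in A_{i,s}$ for cofinitely many $s$), form the uniform $\Delta^0_1$ binary tree
$$T_s := \{\sigma \in 2^{<\omega} : \forall n < |\sigma|\, [(\sigma(n){=}1 \to n \notin A_{1,s}) \wedge (\sigma(n){=}0 \to n \notin A_{0,s})]\}.$$
Then $T := \lim_s T_s$ is a $\Delta^0_2$ binary tree of true separating initial segments, infinite by disjointness (using $\BSt$ to produce a separator of each length), and any $\Delta^0_2$ path is a separator. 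For the converse (ii) $\Rightarrow$ (iii), from an infinite $\Delta^0_2$ tree $T = \lim_s T_s$ I would define disjoint $\Sigma^0_2$ sets $A_0, A_1 \subseteq 2^{<\omega} \times \mathbb{N}$, with $(\sigma, N) \in A_j$ iff no extension of $\sigma^\frown j$ of length $N$ lies in $T$ while some extension of $\sigma$ of length $N$ does --- a Simpson-style $\Sigma^0_2$ surrogate for ``$\sigma^\frown j$ fails to be on an infinite branch.'' A separator $D$ then recovers a $\Delta^0_2$ path through $T$.

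For (iii) $\Rightarrow$ (i), I would build a $\Delta^0_2$ tree whose paths directly encode cohesive enumerations. Letting $X_\sigma := \bigcap_{k<|\sigma|} R_k^{\sigma(k)}$ with $R_k^1=R_k$ and $R_k^0=\overline{R_k}$, take the tree whose nodes (after suitable binary encoding) represent pairs $(\sigma, \langle c_0,\ldots,c_{|\sigma|-1}\rangle)$ satisfying $c_i \in X_{\sigma\rest(i+1)}$, $c_0 < c_1 < \cdots$, and $c_i \geq i$. This tree is $\Delta^0_2$ (in fact $\Sigma^0_1$), infinite by pigeonhole ($\mathbb{N}$ partitions into $2^k$ classes $X_\sigma$, so some has $\geq k$ elements), and an infinite $\Delta^0_2$ path decodes to an infinite cohesive set $C = \{c_i : i \in \mathbb{N}\}$. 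For (i) $\Rightarrow$ (iii), given an infinite $\Delta^0_2$ tree $T = \lim_s T_s$, I would list the $\Delta^0_1$ sets $R_{\langle \sigma, n \rangle} := \{s : \exists \tau \in T_s\,(\tau \supseteq \sigma \wedge |\tau| = n)\}$; from an $R$-cohesive set $C$, the cofinite-membership limits in each $R_{\langle \sigma, n\rangle}$ produce a $\Delta^0_2$ uniform choice of the infinite-branch child at each $\sigma$, tracing a path through $T$.

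I expect the main obstacle to be controlling the quantifier complexity of the auxiliary objects in (ii) $\Rightarrow$ (iii) and (i) $\Rightarrow$ (iii): the naive predicate ``$\sigma$ lies on an infinite branch of $T$'' is $\Pi^0_2$ and escapes the tools of $\RCA + \BSt$, so one must substitute $\Sigma^0_2$-definable surrogates and rely on $\BSt$ to handle the resulting bounded quantifiers correctly. A secondary subtlety in (iii) $\Rightarrow$ (i) is that the $\Delta^0_2$ path produced is a priori a virtual object (a pointwise limit of approximations), and translating it into a genuine cohesive set in the model requires the tree to be encoded so that decoding is a $\Delta^0_1$ operation on the approximating sequence; the $(\sigma, F)$ tree above is designed with this in mind.
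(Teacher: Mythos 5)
Your decomposition differs from the paper's: you aim for the two biconditionals (ii)~$\Leftrightarrow$~(iii) and (i)~$\Leftrightarrow$~(iii), while the paper proves the cycle (iii)~$\Rightarrow$~(i)~$\Rightarrow$~(ii)~$\Rightarrow$~(iii). That is a fine structural choice, but there is a genuine gap in your (iii)~$\Rightarrow$~(i). You encode the prospective cohesive enumeration $c_0 < c_1 < \cdots$ directly into the tree, so that the $c_i$ are read off the $\Delta^0_2$ path. But a $\Delta^0_2$ path is a pointwise limit of $\Delta^0_1$ approximations, and therefore the set $C=\{c_i\}$ you extract from it is $\Delta^0_2$, not $\Delta^0_1$; $\COH$ requires a cohesive set in the model. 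Your claim that the $(\sigma,F)$ encoding makes decoding ``a $\Delta^0_1$ operation on the approximating sequence'' does not hold: you still need to know when the approximation to the level-$x$ node has settled, which is precisely the $\Delta^0_2$ information. The paper avoids this by keeping the tree lean --- its nodes record only the guessed pattern of $R_k$-memberships --- so that the $\Delta^0_2$ path is merely a \emph{guide}. A separate $\Delta^0_1$ search then builds $C$ element by element: at stage $\ell$, it hunts for the least fresh $s$ whose $R$-pattern agrees with the stage-$s$ approximation $f(\cdot,s)$ to $P$ on indices $k<\ell$, with $\BSt$ supplying the existence of such an $s$ and $\ISo$ the termination of the search. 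That second, purely $\Delta^0_1$ construction is the essential missing step.

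A smaller but still real problem sits in your (ii)~$\Rightarrow$~(iii): your $A_0,A_1$ are indexed by pairs $(\sigma,N)$, so a separator $D$ answers a different yes/no question for each $N$ at a fixed $\sigma$, and you never explain how to aggregate those answers into a single direction at $\sigma$ (the naive aggregation ``some child has length-$N$ extensions for all $N$'' is $\Pi^0_2$, which is exactly what you were trying to avoid). The paper's Lemma~\ref{lem:SepTrees} places the existential over stages \emph{inside} the definition of $A_0,A_1\subseteq 2^{<M}$, so that the separator decides a direction per $\sigma$, and then the path is read off as the lex-least string of each length consistent with $D$ (with $\BSt$ guaranteeing well-definedness and $\IDt$ guaranteeing a node at every length). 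Also, in your (iii)~$\Rightarrow$~(ii), note that for merely $\Sigma^0_2$ sets $A_i$ the approximations $A_{i,s}$ need not converge pointwise off $A_i$, so $T=\lim_s T_s$ as you define it may fail to exist; this is fixable but needs a more careful choice of approximation.
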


 See Section \ref{sec:4} below for a proof. Here $\Sigma^0_n,\Delta^0_n$ are intended to mean the `boldface' versions of the respective classes, taking one arbitrary parameter from the second-order part of the model. Similarly $\Sigma_n,\Delta_n$ are the `lightface' versions taking no second-order parameters, and $\Sigma^A_n,\Delta^A_n$ take the parameter $A$ and no others. Through Theorem \ref{thm:3} we may think of $\COH$ as a `jumped' version of $\WKL$. We can use this to rewrite our earlier recipe to work with models of $\RCA + \BSt$. Because there is a step where we cannot assume $\ISo$, we reason in $\RCAstar$ rather than $\RCA$. Recall that $\RCA + \BSt$ is equivalent to $\RCAstar + \BSt$, and $\RCAstar + \BSo$ is the same as just $\RCAstar$.

\begin{enumerate}
 \item Start with a model $\CM \models \RCAstar + \BSt$ topped by a set $A$, and choose an infinite $\Delta^0_2$ binary tree $T$. (Here \emph{topped} means there is an $A$ in $\CM$ such that every other set in $\CM$ is $\Delta_1^A$.)
 \item \label{recipe:2:ii} Then $\CM[A'] \models \RCAstar + \BSo$. Find a path $P$ through $T$ such that $\CM[P \oplus A'] \models \RCAstar + \BSo$.
 \item \label{recipe:2:iii} Find $B$ such that $P$ is $\Delta_2^B$ and $\CM[B] \models \RCAstar + \BSt$.
 \item Extend to $\CM[B]$, and repeat.
\end{enumerate}

Before we can put this recipe to use, we must overcome three further hurdles: we need to check that $\CM[A']$ does indeed satisfy $\BSo$ in step \ref{recipe:2:ii}, we need a way of producing the $P$ in step \ref{recipe:2:ii}, and we need a way of producing the $B$ in step \ref{recipe:2:iii}. The bulk of this article is devoted to these hurdles. First, in Section \ref{sec:2}, we develop the theory of Turing degrees in the context of $\RCAstar$ up to the point where we can justify the claim in \ref{recipe:2:ii} and obtain, in Section \ref{sec:2.2}, the jump-inversion theorem needed for step \ref{recipe:2:iii}. In Section \ref{sec:3}, we present a number of known results about Weak K\"{o}nig's Lemma that give us the $P$ for step \ref{recipe:2:ii}. Section \ref{sec:3} proves Theorem \ref{thm:3}, linking the construction to $\COH$. Section \ref{sec:5} formally combines the recipe with the ingredients to complete the proof of Theorem \ref{thm:1}. Finally, in the Appendix, we develop some more degree theory over models of $\RCAstar$ and $\RCA$ and give a quick set of proofs of conservation for $\WKL$ and $\COH$.

\subsection{Definitions} \label{sec:1.2}

We use the language $\langle +,\cdot,0,1,<,\in\rangle$ of second-order arithmetic, and much of the usual language of reverse mathematics as laid out in Simpson's book \cite[Ch.~I]{Simpson:2009:Subsystems_of_Second_Order_Arithmetic}. As usual, we are interested in two-sorted models $\CM = (M, \CS)$, where $M$ is the \emph{first-order part}, whose elements are \emph{numbers}, and which interprets $+,\cdot,0,1,$ and $<$; where $\CS$ is the \emph{second-order part}, whose elements are \emph{sets in $\CM$} or \emph{$\Delta^0_1$ sets} and which are identified with elements of the power set $\CP(M)$; and where $\in$ is interpreted as the membership relation between elements of $M$ and elements of $\CS$.  The numbers satisfy the axiom system $P^-$ consisting of the basic rules of Peano arithmetic, including, for example, the distributivity of $\cdot$ over $+$, but not including the axioms of induction. (See Hajek and Pudlak \cite[Ch.~V]{Hajek-Pudlak:1998:Metamathematics_of_first_order_arithmetic} for explicit axiomatizations of $P^-$.) The $\Delta^0_1$ sets obey the axiom of extensionality (which is why we are able to identify them with elements of $\CP(M)$).

We say that a set $A \subseteq M$ is \emph{bounded} if there is a $b \in M$ such that $a \in A$ implies $a < b$. Otherwise, $A$ is \emph{unbounded}. A bounded set $A$ is \emph{finite} if it can be coded canonically as a natural number $c \in M$ (typically by way of its binary expansion). In this article we use \emph{infinite} as a synonym for \emph{unbounded}.\footnote{This is in general a bad definition for \emph{infinite} when considering non-$\omega$-models, as there can be unbounded sets which nonetheless are bounded in size.} For each $n$ and each $A,B \subseteq M$, we say $B$ is a \emph{$\Sigma_n^A$ set} (resp.\ a \emph{$\Delta_n^A$ set}) if $B$ is $\Sigma_n$ definable ($\Delta_n$ definable) with first-order parameters from $M$ and with $A$ as a second-order parameter. We call $B$ a \emph{$\Sigma^0_n$ set} (resp.\ a \emph{$\Delta^0_n$ set}) if $B$ is a $\Sigma^A_n$ set (a $\Delta^A_n$ set) for some $A \in \CS$. All our models $\CM$ will obey the \emph{$\Delta^0_1$ comprehension axiom}:

\begin{description}
  \item [${\Delta^0_1}$ comprehension ($\Deltacomp$):] If $A \subseteq M$ is $\Delta^0_1$ then $A \in \CS$.
\end{description}

\begin{remark}
 (i) Since all our models $\CM$ satisfy $\Delta^0_1$ comprehension, a set $A \subseteq M$ is $\Delta^B_1$ for some $B \in \CS$ if and only if $A$ is in $\CS$. In particular, there is no conflict between the definitions of \emph{$\Delta^0_1$ set} in the first and second paragraphs of this section.

 (ii) Our reason for preferring the term \emph{$\Delta^0_1$ set} rather than simply \emph{set} for elements of $\CS$ is that much of our focus will be on sets outside of the model, which we will usually classify by their level of definability, for instance the $\Delta^0_2$ trees and paths in Theorem \ref{thm:3} above.
\end{remark}

Given a model $\CM = (M, \CS)$ and a set $A \subseteq M$, the \emph{extension of $\CM$ by $A$} is $\CM[A] = (M, \CS^*)$ where $\CS^* \subseteq \CP(M)$ is the transitive closure of $\CS \cup \{A\}$ under the join operation $\oplus$ and $\Delta^0_1$ comprehension.

In addition to the axioms of $P^-$, our models will satisfy certain induction axioms. We phrase these in terms of \emph{regular sets}, which are those $A \subseteq M$ whose every initial segment $A \rest n = A \cap [0,\ldots,n-1]$ is finite (in the sense defined above). (The term \emph{regular} is from $\alpha$-recursion theory. Regular sets are also known by several other names, including \emph{amenable} and \emph{piecewise coded}.) For our purposes, the most important induction axioms are:

\begin{description}
 \item [Induction for $\Sigma^0_n$ sets ($\ISn$):] If $A$ is $\Sigma^0_n$ then $A$ is regular.
 \item [Induction for $\Delta^0_n$ sets ($\IDn$):] If $A$ is $\Delta^0_n$ then $A$ is regular.
\end{description}
The latter is often phrased in the following bulkier, but equally useful, form.
\begin{description}
 \item [Bounding for $\Sigma^0_n$ sets ($\BSn$):] If $\phi$ is a $\Sigma^0_n$ formula with parameters from $\CM$ and $x_0$ is an element of $M$, then $(\forall x< x_0)(\exists y)\phi(x,y)$ implies $(\exists y_0)(\forall x < x_0)(\exists y < y_0) \phi(x,y)$.
\end{description}

It is a theorem of Slaman \cite{slaman:2004} that the two are equivalent in the context of purely first-order arithmetic:

\begin{proposition}[Slaman]
 $P^- + \mathrm{I}\Sigma_0 + \exp \proves \mathrm{I}\Delta^0_n \leftrightarrow \mathrm{B}\Sigma_n$ for every $n \geq 1$, where $\exp$ is an axiom stating that the exponential function $x \mapsto 2^x$ is total.
\end{proposition}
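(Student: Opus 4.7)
The equivalence breaks into two implications. The direction $\mathrm{B}\Sigma_n \to \mathrm{I}\Delta_n$ is the more accessible one, and I would handle it first by a direct argument. Given a $\Delta_n$ predicate $\phi(x)$ with $\phi(0)$, closure under successor, but with $\neg \phi(x_0)$ for some $x_0$, write $\phi(x) \leftrightarrow \exists y\,\psi(x,y)$ and $\neg\phi(x) \leftrightarrow \exists y\,\chi(x,y)$ with $\psi, \chi \in \Pi_{n-1}$. The hypotheses give $\forall x \leq x_0\,\exists y(\psi(x,y) \vee \chi(x,y))$, so $\mathrm{B}\Sigma_n$ supplies a uniform bound $y_0$. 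Below $y_0$ the predicate $\phi$ becomes $\Pi_{n-1}$ (the $n=1$ case uses closure of $\Sigma_0$ under bounded quantifiers), and one can apply $\mathrm{I}\Sigma_{n-1}$, available from $\mathrm{B}\Sigma_n$ by the Paris--Kirby theorem, to derive $\phi(x_0)$, a contradiction.

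The harder direction $\mathrm{I}\Delta_n \to \mathrm{B}\Sigma_n$ is Slaman's theorem proper, and here is where $\exp$ enters essentially. The plan is to argue by contrapositive: assuming a $\Sigma_n$ formula $\phi(x,y) \equiv \exists z\,\psi(x,y,z)$ with $\psi \in \Pi_{n-1}$ and some $x_0$ with $\forall x < x_0\,\exists y\,\phi(x,y)$ but with no uniform $y$-bound, I would try to construct a $\Delta_n$ predicate $S(k)$ on $[0,x_0]$ whose inductive step succeeds but whose value at $x_0$ fails, violating $\mathrm{I}\Delta_n$. The key device is to use $\exp$-enabled sequence coding: for a putative bound $u$, the assertion "for each $i<k$ there are witnesses $(y_i, z_i) < u$ with $\psi(i, y_i, z_i)$" can be encoded by the existence of a single number $c$ of size at most $u^{2k}$ representing $\langle (y_0,z_0),\ldots,(y_{k-1},z_{k-1})\rangle$, reducing what looked like an uncontrolled quantification to a bounded one. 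With careful arrangement one arranges that both $S$ and $\neg S$ admit $\Sigma_n$-definitions, at which point $\mathrm{I}\Delta_n$ pushes the induction through to $k = x_0$, producing the desired uniform bound and contradicting the failure of $\mathrm{B}\Sigma_n$.

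The main obstacle is squarely in the complexity bookkeeping of the hard direction, and specifically in arranging that the inductive predicate really does lie in $\Delta_n$ rather than merely $\Sigma_n$. A naive candidate such as $R(k) \equiv \exists u\,\forall x<k\,\exists y,z<u\,\psi(x,y,z)$ is only $\Sigma_n$, so $\mathrm{I}\Delta_n$ does not directly apply; the insight is to combine "a minimum-witness coded sequence exists up to stage $k$" with its complementary $\Pi_n$ form derived from the hypothesis of failure, yielding a genuine $\Delta_n$ description. The role of $\exp$ is to make the coded stage sequence exist of the appropriate length, and without it the reduction to bounded quantifiers collapses. This is why the proposition is stated over $P^- + \mathrm{I}\Sigma_0 + \exp$, and also why the easier Paris--Kirby equivalence $\mathrm{B}\Sigma_{n+1} \leftrightarrow \mathrm{I}\Sigma_n$ does not suffice for the present statement.
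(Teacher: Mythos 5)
The paper does not prove this proposition; it cites Slaman \cite{slaman:2004} and uses the result as a black box, so there is no in-paper argument to compare against. Your treatment of the easy direction, $\mathrm{B}\Sigma_n \to \mathrm{I}\Delta_n$, is correct and standard: fix $\Sigma_n$ definitions of both $\phi$ and $\neg\phi$ via $\Pi_{n-1}$ matrices $\psi$ and $\chi$, apply $\mathrm{B}\Sigma_n$ to the $\Sigma_n$ formula $\exists y\,(\psi \vee \chi)$ to get a uniform bound $y_0$ for all $x \leq x_0$, observe that on $[0,x_0]$ the formula $\exists y < y_0\,\psi(x,y)$ is $\Pi_{n-1}$ (closure of $\Pi_{n-1}$ under bounded quantifiers, available from $\mathrm{B}\Sigma_{n-1}$), and then apply $\mathrm{I}\Sigma_{n-1}$, which $\mathrm{B}\Sigma_n$ supplies by Paris--Kirby. (Minor slip: the bound $y_0$ caps the witnesses; the induction still runs over $x$ up to $x_0$, not ``below $y_0$.'')

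The hard direction, $\mathrm{I}\Delta_n \to \mathrm{B}\Sigma_n$, is where there is a genuine gap. You correctly diagnose the obstacle: the natural cut $I = \{k : \exists c\,[c \mbox{ codes the least-witness sequence below } k]\}$ has an evident $\Sigma_n$ definition but no obvious $\Pi_n$ one, so $\mathrm{I}\Delta_n$ cannot be applied to $I$ as stated. But your proposed resolution --- ``combine \dots\ with its complementary $\Pi_n$ form derived from the hypothesis of failure'' --- is exactly the assertion that needs a proof, not a proof of it. The failure of $\mathrm{B}\Sigma_n$ does not by itself furnish a $\Sigma_n$ description of $M \setminus I$; if it did so in any naive way, the result would have been folklore long before 2004, since the rest of the setup (least witnesses, $\exp$-coded sequences, the cut $I$) was well understood. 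Slaman's actual construction does not simply exhibit $I$ as $\Delta_n$; it requires a further, nontrivial maneuver in which the $\Delta_n$ predicate that defeats induction is engineered from the cofinal least-witness function rather than read off from the bounding failure directly. As written, your sketch locates the crux of the theorem precisely but does not resolve it, and the step that is missing is the entire content of Slaman's contribution.
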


The same holds over $\RCA$ when second-order parameters are allowed. The schemes of $\Sigma^0_n$ induction and $\Sigma^0_n$ bounding were introduced by Paris and Kirby \cite{PK:78:CSI}, who proved that they formed a strict hierarchy of implications: For every $n \geq 0$, $\ISnp$ implies $\BSnp$, which in turn implies $\ISn$. Two possible base axiom systems for reverse mathematics are:

\begin{description}
 \item [Recursive comprehension axiom scheme ($\RCA$):] $P^- + \Deltacomp + \ISo$;
 \item [Recursive comprehension-star ($\RCAstar$):] $P^- + \Deltacomp + \ISz + \exp$.
\end{description}
 
The second system, $\RCAstar$, is strong enough to prove $\BSo$ but not $\ISo$. Hence $\RCA$ is strictly stronger than $\RCAstar$. Now we give two combinatorial principles, $\WKL$ and $\COH$. Although we have already defined $\COH$ once at the beginning of the article, we restate it here in the language of this section so there is no misunderstanding. We use $2^{<M}$ to denote the set of all finite binary strings in a given model $(M,\CS)$, coded as elements of $M$ (for example by prepending 1 to the string and then viewing it as a number written in base two). A \emph{binary tree} is a subset of $2^{<M}$ that is closed under initial segment. A \emph{path} is a binary tree that is totally ordered by end-extension; but we often identify an infinite path with the unique infinite string extending all of its members. A sequence $\langle A_0,A_1,\ldots\rangle$ of sets $A_i \subseteq M$ is \emph{uniformly $\Delta^0_1$} if the set $\{\langle x,i\rangle : x \in A_i\}$ is $\Delta^0_1$. (Here $\langle ., . \rangle$ is the usual quadratic pairing function $M \times M \rightarrow M$.)

\begin{description}
 \item [$\WKL$:] If $T$ is an infinite $\Delta^0_1$ binary tree, then $T$ contains an infinite $\Delta^0_1$ path $P$.
 \item [$\COH$:] If $\vec R = \langle R_0,R_1,\ldots\rangle$ is a uniformly $\Delta^0_1$ sequence of sets then there is a $\Delta^0_1$ infinite $\vec R$-cohesive set $C$.
\end{description}

 Kleene's T-predicate construction gives us a `universal' $\Sigma_1$ formula $\varphi^Z(e,x)$ with free first-order variables $e,x$ and free second-order variable $Z$ such that, whenever $(M,\CS)$ is a model of $\RCAstar$ and $\psi^Z(x)$ is a formula with first-order parameters from $M$ and all regular $A \subseteq M$,
  $$\CM \models (\exists e)(\forall x)[ \varphi^A(e,x) \leftrightarrow \psi^A(x)].$$
 The construction also yields a `universal' $\Sigma_1$ partial function $\Phi^Z(e,x)$ with variables $e,x,Z$ such that for every $\Sigma_1$ partial function $\Psi^Z(x)$ with parameters from $M$ and every regular $A \subseteq M$,
  $$\CM \models (\exists e)(\forall x)[\Phi^A(e,x) \halts \leftrightarrow \Psi^A(x)\halts,~\mathrm{and~their~values~are~equal}\}.$$
 Here we use $f(x)\halts$ to mean $f(x)$ has some value $y$, and $f(x)\nohalts$ to mean it does not. Write $\Phi_e^A$ to mean the partial $\Sigma_1^A$ function given by $\Phi_e^A(x) = \Phi^A(e,x)$. Each $\Phi^A_e$ has a $\Sigma^A_0$ stage-by-stage approximation $(\Phi^A_{e,s})_{e \in M}$ as usual. The \emph{jump} of $A$ is the $\Sigma_1^A$ set $A' = \{e \in M : \Phi_e^A(e) \halts\}$. We write $A^{(n)}$ to mean $A$ with the jump operation iterated $n$ times, but when convenient, we may still write $A''$ and so on.

\section{Basic recursion theory over a model of $\RCAstar$}\label{sec:2}

\emph{Reverse recursion theory} is one name for the practice of asking what subsystems of first-order Peano arithmetic are necessary and sufficient to prove theorems of classical recursion theory. Although typically the focus is on the inductive constructions associated with the recursively enumerable degrees (finite injury, permitting, $0''$ arguments and so on---see Chong, Li, and Yang \cite{CLY:2014:survey} for a recent survey), there has also been work on the more general theory of degrees. Some examples are Hajek's work on the Limit Lemma and the Low Basis Theorem (see \cite[Ch.~1.3]{Hajek-Pudlak:1998:Metamathematics_of_first_order_arithmetic}), and Towsner's work on jump inversion (see Appendix B). In this section we present a number of recursion-theoretic facts, formalized in the language of arithmetic. We view as recursive the sets which are $\Delta^0_1$ in a given model $\CM$, as r.e.\ those which are $\Sigma^0_1$ (i.e.\ $\Sigma_1$ with a second-order parameter from $\CM$), and so on.

Although there are many theorems that can be formalized in this way, we limit ourselves in this article to what is needed to prove Theorem \ref{thm:1}. The most interesting is the jump-inversion theorem in Section \ref{sec:2.2}. Later, in the Appendix, we formalize a few more, mostly having to do with $\omega$-r.e.\ degrees, to prove certain conservation results for $\WKL$. It is possible that a systematic development of degree theory in nonstandard models, perhaps drawing from $\alpha$-recursion theory, would clarify other problems in reverse mathematics.

\subsection{Models of $\RCAstar$ and their extensions}

Most of our constructions pass between models $\CM \models \RCAstar$ and extensions $\CM[G] \models \RCAstar$ while preserving properties such as induction (in varying amounts) or adding new sets such as paths through a tree. Gathered here are a number of technical lemmas saying, essentially, that $\CM[G]$ behaves as one would hope or expect.

\begin{lemma}
 $\RCAstar \proves \BSo$
\end{lemma}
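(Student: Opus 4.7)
The plan is to reduce this to the classical first-order theorem that $P^- + \ISz + \exp$ proves $\BSo$ (due essentially to Parikh; see Hájek--Pudlák \cite[Ch.~V]{Hajek-Pudlak:1998:Metamathematics_of_first_order_arithmetic}) and to observe that the proof relativizes to an arbitrary second-order parameter from $\CS$.

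First I would reduce to the $\Sigma^0_0$ case. Given $\phi(x, y) \in \Sigma^0_1$, write $\phi(x, y) \equiv (\exists z)\psi(x, y, z)$ with $\psi \in \Sigma^0_0$, pair $w = \langle y, z\rangle$, and set $\psi'(x, w) \equiv \psi(x, (w)_0, (w)_1)$. Under the standard pairing $y \leq \langle y, z\rangle$, so any bound on $w$ automatically bounds $y$. Hence it suffices to handle a $\Sigma^0_0$ formula $\psi'(x, w)$ with a single set parameter $A \in \CS$.

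Folding $A$ into the language as a new predicate symbol, the resulting instance becomes purely first-order: an instance of $\BSo$ over $P^- + \ISz + \exp$ relativized to $A$. Since $\CM$ satisfies $\ISz$ and $\exp$ in its first-order part and these schemes allow $A$ as a parameter, the classical result applies verbatim in the expanded language and delivers the desired bound.

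The main obstacle is the classical theorem itself, whose proof is not trivial. The standard approach uses $\exp$ to code each subset of $[0, x_0)$ as a single number below $2^{x_0}$, turning the increasing chain $S_b \equiv \{x < x_0 : (\exists w < b)\psi'(x, w)\}$ into a $\Sigma^0_0$-definable nondecreasing function $F : M \to [0, 2^{x_0})$; a careful bounded-induction argument then shows that $F$ must stabilize at the full set $[0, x_0)$, from which one reads off $b$. All steps are uniform in the oracle, so the relativization to $\CM$ is automatic, completing the proof.
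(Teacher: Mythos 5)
The claimed reduction to a ``classical first-order theorem'' is where your argument breaks down: $P^- + \mathrm{I}\Sigma_0 + \exp$ does \emph{not} prove $\mathrm{B}\Sigma_1$, even in the purely first-order setting, and there is no such theorem to cite. There exist models (cuts in models of PA that are closed under exponentiation but not semi-regular) satisfying $\mathrm{I}\Delta_0 + \exp + \neg\mathrm{B}\Sigma_1$. This non-implication is precisely why the present lemma is not a triviality and why the paper records it explicitly; as it notes, the lemma is what establishes $P^- + \exp + \mathrm{B}\Sigma_1$, and not merely $P^- + \exp + \mathrm{I}\Sigma_0$, as the first-order part of $\RCAstar$.

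The ingredient you have dropped when you ``fold $A$ into the language'' and pass to a ``purely first-order'' statement is $\Delta^0_1$ comprehension, and that axiom does all of the heavy lifting. The paper's proof forms the collection $B$ of finite subsets $F \subseteq \{0,\dots,a-1\}$ that admit a bound $y_0$ on the $\psi$-witnesses for the $x \in F$, observes that $B$ is bounded (using $\exp$) and in the model (using $\Deltacomp$), hence coded by a single number, and only then runs $\mathrm{I}\Sigma_0$ on the now $\Sigma_0$ predicate ``$\{0,\dots,n-1\} \in B$.'' Without $\Deltacomp$ one cannot package the unbounded existential over $y_0$ into a coded object, and the induction step stays $\Sigma_1$, which $\mathrm{I}\Sigma_0$ cannot handle. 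Your sketch with the nondecreasing function $F : b \mapsto S_b$ has the same difficulty: showing $F$ stabilizes requires bounding the witnesses $b$, which is just $\mathrm{B}\Sigma_1$ again unless one invokes comprehension to realize the relevant bounded collection as a set. So the proof needs to be carried out inside the second-order system with $\Deltacomp$ in hand, as the paper does, not delegated to a first-order fragment that is genuinely weaker.
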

\begin{proof}
 Suppose $\CM = (M,\CS)$ is a model of $\RCAstar$ and $\CM \models (\forall x<a)(\exists y)\psi(x,y)$, with $a \in M$ and $\psi$ a $\Sigma^0_0$ formula. Let
  $$B = \{F \subseteq \{0,\ldots,a-1\} : F~\mathrm{is~finite,~and}~(\exists y_0)(\forall x<a)[x \in F \rightarrow (\exists y < y_0) \psi(x,y)]\}.$$
 Then $B$ is bounded (since $B$ is contained in the power set of $\{0,\ldots,a-1\}$, which is bounded by $\exp$) and $\Delta^0_1$, so it is finite by $\Deltacomp$. It follows by $\ISz$ that $B$ has an element of size $a$, namely $\{0,\ldots,a-1\}$ itself.
\end{proof}

Together with Proposition \ref{prop:3.1} below, this establishes $P^- + exp + B\Sigma_1$ as the first-order part of $\RCAstar$.

 \begin{lemma}\label{lem:2.2}
  Suppose $M \models P^- + \exp$ is a model and $A \subseteq M$ is such that $(M,\{A\}) \models \IDo$. Then:
  \begin{enumerate}
   \item[\rm (i)] If $B$ is $\Delta^A_1$ and $C$ is $\Sigma^B_1$, then $C$ is $\Sigma^A_1$.
   \item[\rm (ii)] If $B$ is $\Delta^A_1$ and $C$ is $\Delta^B_1$, then $C$ is $\Delta^A_1$.
  \end{enumerate}
 \end{lemma}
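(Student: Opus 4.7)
Part (ii) follows immediately from (i): if $C$ is $\Delta^B_1$, then both $C$ and $\overline{C}$ are $\Sigma^B_1$, so (i) gives that both are $\Sigma^A_1$, whence $C$ is $\Delta^A_1$. The content lies in (i), which is a formalization of the transitivity of Turing reducibility.

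For (i), fix $\Sigma^A_1$ definitions of $B$ and its complement, say
\[
x \in B \iff (\exists u)\,\phi_0^A(x, u), \qquad x \notin B \iff (\exists u)\,\phi_1^A(x, u),
\]
with $\phi_0, \phi_1$ being $\Sigma^A_0$ formulas. Let $C$ be defined by $z \in C \iff (\exists y)\,\psi^B(z, y)$, where $\psi^B$ is $\Sigma^B_0$. After Kleene normal form one may assume $\psi^B(z, y)$ queries $B$ only at positions less than some term $t(z, y)$. I would then rewrite $C$ by replacing these queries with lookups into a finite code $c$ for $B \rest t(z,y)$, whose correctness is certified by $\Sigma^A_1$ witnesses:
\[
z \in C \iff (\exists y, c, v)\bigl[\mathrm{Cert}(c, v, t(z, y)) \wedge \tilde\psi(z, y, c)\bigr],
\]
where $\tilde\psi$ is $\psi^B$ with every occurrence of $B(i)$ replaced by the $i$-th bit of $c$, and $\mathrm{Cert}(c, v, n)$ is the $\Sigma^A_0$ formula
\[
(\forall i < n)\bigl[((c)_i = 1 \wedge \phi_0^A(i, (v)_i)) \vee ((c)_i = 0 \wedge \phi_1^A(i, (v)_i))\bigr].
\]
Both conjuncts under the outer existential quantifier are $\Sigma^A_0$, so the rewritten formula is $\Sigma^A_1$.

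What remains is to verify that this defines the same set: whenever the original $(\exists y)\,\psi^B(z, y)$ holds, the witnesses $c$ and $v$ must actually exist. The witness $c$ exists because the hypothesis $(M, \{A\}) \models \IDo$ makes $B$ regular, so $B \rest t(z, y)$ is finite and hence (using $\exp$) codeable as a single number. The witness $v$ exists because each $i < t(z, y)$ has some certificate $u_i$ for its membership in $B$ or $\overline{B}$, and these certificates can be bundled into a single tuple $v$ provided there is a uniform bound on them. This bundling step is the crux of the argument and is where the hypothesis is genuinely used: $\IDo$ relative to $A$ yields $\BSo$ relative to $A$ by the Slaman proposition above applied with parameter $A$, and $\BSo$ supplies the required uniform bound on the $u_i$. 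Without the induction assumption one would only get a pointwise collection of witnesses, not a bounded tuple that could be coded in $M$.
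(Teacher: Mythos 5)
Your proof is correct and follows essentially the same route as the paper: express $C$ by a $\Sigma^A_1$ formula that existentially quantifies over a finite code for the relevant initial segment of $B$ together with certificates, using regularity of $B$ (from $\IDo$) to code that segment and $\BSo$ (via Slaman's equivalence) plus $\exp$ to bound and bundle the certificates. The only cosmetic difference is that the paper normalizes the $\Sigma_1$ witnesses to be monotone and thus collapses your tuple $v$ into a single shared stage $s$ via $\BSo$, whereas you keep a coded tuple of witnesses; both work.
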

 \begin{proof}
  (i) Fix $\Sigma_1$ formulae $\phi_0,\phi_1,\psi$ satisfying:

  $\begin{array}{ccc}x \in B &\iff& (\exists s)\phi^A_1(x,s),\\
    x \not \in B &\iff& (\exists s)\phi^A_0(x,s),\\
    y \in C &\iff& (\exists t)\psi^B(y,t).
   \end{array}$

  Assume for simplicity that $\phi^A_1(x,s)$ implies $\phi^A_1(x,t)$ for all $t > s$, and similarly for $\phi^A_0$ and $\psi^B$. Combining the first two formulae, applying $\BSo$ and invoking $\exp$ and the regularity of $A$, we get:
  $$(\forall x_0)(\exists \sigma \in 2^{x_0})(\exists s)(\forall x < x_0)[\sigma(x)=0 \leftrightarrow \phi^A_0(x,s)~\mathrm{and}~\sigma(x)=1\leftrightarrow\phi^A_1(x,s)].$$
  In particular, any initial segment of $B$ is $\Sigma_1$ in a finite initial segment of $A$, and hence $B$ is regular. Now since the third formula, if true, uses only a finite initial segment of $B$:
  $\begin{array}{ccl}
   y \in C &\iff& (\exists t)(\exists \sigma \subseteq B~\mathrm{finite})\psi^\sigma(y,t) \\
           &\iff& (\exists t)(\exists \sigma~\mathrm{finite})[\psi^\sigma(y,t)~\mathrm{and}~(\forall x < |\sigma|)(\forall i < 2)(\sigma(x)=i \leftrightarrow \phi^A_i(x,t))]. 
   \end{array}$

   (ii) Apply part (i) both to $C$ and to its complement $\bar C$.
 \end{proof}
  Because our definition of $\Sigma^0_1$ admits only one second-order parameter, the Lemma above relativizes easily to $\CM \cup \{A\}$ with $\CM \models \RCAstar$ in place of $M$. A thorny detail arises, however, when we try to pass to $\CM[A]$, namely: We do not know whether there might be $B \in \CM$ such that $\BSo$ fails relative to the join $A \oplus B$. Fortunately, it is enough for our purposes to consider only $A$ which are \emph{above} $\CM$, in the sense that every $B \in \CM$ is $\Delta^A_1$.

 \begin{proposition} \label{prop:2.3}
  \begin{enumerate}
   \item[\rm (i)]
  If $\CM \models \RCA$ and $A$ is above $\CM$ and $\CM \cup \{A\} \models \ISo$, then $\CM[A] \models \RCA$.
   \item[\rm (ii)]If $\CM \models \RCAstar$ and $A$ is above $\CM$ and $\CM \cup \{A\} \models \IDo$, then $\CM[A] \models \RCAstar$.
  \end{enumerate}
 \end{proposition}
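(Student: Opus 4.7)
The plan is to reduce everything to the single observation that, since $A$ is above $\CM$, every set in $\CM[A]$ is $\Delta^A_1$; once this is known, the induction axioms for $\CM[A]$ transfer from the induction hypothesis on $A$ alone by Lemma \ref{lem:2.2}. First I would establish the claim that every $B \in \CS^*$ is $\Delta^A_1$, by induction on the stage at which $B$ enters the closure of $\CS \cup \{A\}$ under join and $\Delta^0_1$ comprehension. The base case is handled by the assumption that $A$ is above $\CM$ (with $A$ itself trivially $\Delta^A_1$). The join step is routine. For the $\Delta^0_1$-comprehension step, if $B$ is already $\Delta^A_1$ and $D$ is $\Delta^B_1$, then Lemma \ref{lem:2.2}(ii) gives that $D$ is $\Delta^A_1$, provided $(M,\{A\}) \models \IDo$. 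In case (ii) this hypothesis is given directly; in case (i), the stronger $\CM \cup \{A\} \models \ISo$ implies it.

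With the claim in place, both parts follow quickly. For (ii), the axioms $P^-$, $\ISz$, and $\exp$ are first-order and are inherited unchanged from $\CM$; the axiom $\Deltacomp$ is immediate from the construction of $\CS^*$. Hence $\CM[A] \models \RCAstar$. For (i), the extra axiom to verify is $\ISo$ over $\CM[A]$. Given a $\Sigma^B_1$ set $D$ with $B \in \CM[A]$, the claim says $B$ is $\Delta^A_1$, and Lemma \ref{lem:2.2}(i) upgrades $D$ to a $\Sigma^A_1$ set. The hypothesis $\CM \cup \{A\} \models \ISo$ then guarantees $D$ is regular, which is precisely $\Sigma^0_1$ induction applied to $D$.

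The main hurdle is really conceptual rather than technical: one must check that the induction on the stages of $\CS^*$ is well founded in the metatheory, and that the hypothesis $(M,\{A\}) \models \IDo$ needed for Lemma \ref{lem:2.2} remains valid no matter how deep we are in the construction. Both are fine, because $\CS^*$ is built externally as a least closure, and because $(M,\{A\}) \models \IDo$ is a property of $A$ alone that is not disturbed by enlarging the second-order part around it. Everything else is direct and routine given the machinery already set up.
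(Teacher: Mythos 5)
Your proof is correct and follows the same approach as the paper's, relying on Lemma \ref{lem:2.2} to establish that every set in $\CM[A]$ is $\Delta^A_1$ and transferring induction from the hypothesis on $A$. One small imprecision in part (ii): you describe $\ISz$ as ``first-order and inherited unchanged,'' but in the paper's setting $\ISz$ quantifies over $\Sigma^0_0$ sets which may take second-order parameters, so it is not purely a first-order axiom --- what one actually needs is that every set in $\CM[A]$ is regular, which follows from your claim together with the hypothesis $\CM \cup \{A\} \models \IDo$, and this is exactly the one-line argument the paper gives for (ii). Since you have this ingredient in hand via your claim, the gap is cosmetic rather than substantive.
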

 \begin{proof}
  (i) The axioms $P^-$ and $\exp$ are purely first-order and so are satisfied. As for $\ISo$, by Lemma \ref{lem:2.2} (i) every $\Sigma^0_1$ set in $\CM[A]$ is $\Sigma^A_1$ and hence regular.

  (ii) Similar to the above; by Lemma \ref{lem:2.2}(ii) every $\Delta^0_1$ set of $\CM[A]$ is $\Delta^A_1$ and hence regular.
 \end{proof}
 We are especially interested in expanding models by adding jumps of certain sets. For this we use the following formalization of Post's Theorem for non-$\omega$-models.
\begin{proposition}[Post's Theorem] \label{prop:2.4}
 Fix $M \models P^- + \exp$ and $A \subseteq M$.
 If $(M, \{A\}) \models \ISo$ then for all $n \geq 1$ the $\Sigma_{n+1}^A$ sets are exactly the $\Sigma_n^{A'}$ sets.
\end{proposition}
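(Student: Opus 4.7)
The plan is to induct on $n$, with essentially all of the work concentrated in the base case $\Sigma_2^A = \Sigma_1^{A'}$. The inductive step is a formal quantifier manipulation: assuming $\Sigma_n^A = \Sigma_{n-1}^{A'}$ as classes of predicates (uniformly in all parameters), taking complements gives $\Pi_n^A = \Pi_{n-1}^{A'}$, and prepending an existential number quantifier yields $\Sigma_{n+1}^A = \Sigma_n^{A'}$ in either direction. Uniformity of the base-case translation in its $\Sigma_0^A$ matrix ensures that this lifting through additional quantifier blocks introduces no new hypotheses and requires no further appeal to induction axioms.

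For the forward direction of the base case, $\Sigma_2^A \subseteq \Sigma_1^{A'}$, a $\Sigma_2^A$ predicate of the form $\exists y\,\forall z\,\phi^A(x,y,z)$ with $\phi$ bounded has an inner $\Pi_1^A$ block which, via the universal $\Sigma_1$ formula and partial function fixed at the end of Section~\ref{sec:1.2}, is many-one reducible to the complement of $A'$ by a total $\Delta_1^A$ function $f$: the block is equivalent to $f(x,y)\notin A'$. The resulting $\exists y\,(f(x,y)\notin A')$ is patently $\Sigma_1^{A'}$.

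For the harder reverse direction, write a $\Sigma_1^{A'}$ predicate as $\Phi_e^{A'}(x)\halts$ for some index $e$. Using $\exp$ to code finite strings, I would express this as: there exist a stage $s$ and a finite binary string $\sigma$ such that (i) $\Phi_e^\sigma(x)[s]\halts$ with all oracle queries bounded by $|\sigma|$, (ii) for each $k<|\sigma|$ with $\sigma(k)=1$, $\Phi_k^A(k)\halts$, and (iii) for each $k<|\sigma|$ with $\sigma(k)=0$, $\Phi_k^A(k)\nohalts$. Clause (iii) is outright $\Pi_1^A$. Clause (ii) is prima facie a bounded conjunction of $\Sigma_1^A$ statements, and the main obstacle is to collapse it into a single $\Sigma_1^A$ statement so as not to blow the overall quantifier count past $\Sigma_2^A$. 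This is exactly where the hypothesis $(M,\{A\})\models\ISo$ enters: it yields $\BSo$ relative to $A$ (by the standard Paris--Kirby implication, with $\exp$ available), allowing one to replace the individual existential witnesses $\exists t\,\Phi_k^A(k)[t]\halts$ by a single bound $\exists t_0\,\forall k<|\sigma|\,\exists t<t_0\,\Phi_k^A(k)[t]\halts$. Combining (i)--(iii) in prenex form and collapsing existentials then yields a formula of shape $\exists w\,\forall t\,\theta$ with $\theta$ a $\Sigma_0^A$ matrix, which is $\Sigma_2^A$ as required.
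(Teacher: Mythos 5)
Your proof is correct, and its core mechanism coincides with the paper's: to show $\Sigma_1^{A'}\subseteq\Sigma_2^A$, replace the $A'$-oracle by a finite string $\sigma$ and a stage $s$, using $\ISo$ to furnish the regularity needed to find $\sigma$ and a $\BSo$-type collection argument (relative to $A$) to find $s$; the forward direction likewise reduces the innermost block to a single $A'$-membership query. There are two organizational differences worth pointing out, neither affecting correctness. First, you induct on $n$ with all work in the base case, whereas the paper proves every case in one pass: the transformation touches only the innermost quantifier block (turning the innermost $(\exists y_{n+1})\psi^A$ into an $A'$-query for the forward direction, and the $\Delta^{A'}_0$ matrix into an $\exists\Pi^A_1$ formula for the converse) while the outer blocks ride along unchanged, so your inductive quantifier shuffle is really the same observation repackaged. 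Second, where you verify $\sigma = A'\rest|\sigma|$ by separate clauses (ii) and (iii) and collapse (ii) with $\BSo$, the paper fuses both into the single $\Pi^A_1$ clause $(\forall t>s)(\forall e<|\sigma|)[\Phi^A_{e,t}(e)\halts\leftrightarrow\sigma(e)=1]$; the collection argument you make explicit has not disappeared from the paper's proof, but has migrated into the justification that a suitable $s$ exists (filed there under the remark that $A'$ is regular). One small point you might make explicit: the forward direction's appeal to the universal $\Sigma_1$ partial function already presupposes that $A$ is regular, which is also where $\ISo$ enters on that side.
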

\begin{proof}
 First we show that every $\Sigma_{n+1}^A$ is $\Sigma_n^{A'}$. Suppose first that $n$ is even, and let $B$ be any $\Sigma_{n+1}$ set, say with $x \in B$ iff $(\exists y_1)(\forall y_2) \cdots(\exists y_{n+1}) \psi^A(x,y_1,\ldots,y_{n+1})$, where $\psi$ is $\Delta^A_0$. Since $\ISo$ holds, for any $a,b_1,\ldots,b_{n} \in M$ we have $(\exists y_{n+1}) \psi^A(a,b_1,\ldots,b_{n},y_{n+1})$ iff $(\exists y_{n+1})(\exists \sigma \subseteq A) \psi^\sigma(a,b_1,\ldots,b_{n},y_{n+1})$. Thus we can define a computable $h$ on $n+1$ variables such that this holds iff $h(a,b_1,\ldots,b_{n}) \in A'$. Then
   $$x \in B \iff (\exists y_1)(\forall y_2) \cdots (\forall y_{n}) h(x,y_1,\ldots,y_{n}) \in A',$$
  which is $\Sigma_n^{A'}$, as desired. The case for odd $n$ is similar, producing an $h$ such that
   $$x \in B \iff (\exists y_1)(\forall y_2) \cdots (\exists y_{n}) h(x,y_1,\ldots,y_{n}) \not \in A'.$$
 For the converse, suppose first that $n$ is odd, and let $B$ be $\Sigma^{A'}_n$, say with $x \in B$ iff $(\exists y_1)(\forall y_2)\cdots(\exists y_n)\psi^{A'}(x,y_1,\ldots,y_n)$. By $\ISo$ we know that $A'$ is regular, hence
   $$x \in B \iff (\exists y_1) \cdots (\forall y_{n-1})(\exists \sigma)(\exists s)\left[ \psi^\sigma(x,y_1,\ldots,y_n)\mathrm{~and}~(\forall t > s)(\forall e < |\sigma|)\Phi_{e,t}^A(e) \halts \leftrightarrow \sigma(e)=1\right],$$
 which is $\Sigma_{n+1}^A$. [There is a subtlety here worth mentioning: When $\Phi_e^{A'}(e) \halts$, why can we guarantee that $\Phi_e^{\sigma}(e) \halts$ for some finite initial segment $\sigma$ of $A'$? Because we can find, by recursion on the bounded-quantifier complexity of the $\Delta_0$ matrix of $\Phi_e$, computable upper bounds for the possible values of the terms in it (including e.g.~use of the exponential operations implicit in the use of oracles and strings)]. The case for even $n$ is similar.
 \end{proof}
This proposition can be iterated to get results about $\Sigma_n^{A''}$ sets in a model of $\ISt$, and so on. From Propositions \ref{prop:2.3} and \ref{prop:2.4} we obtain quick corollary.
 \begin{corollary}
  Fix $n \geq 1$, $M \models P^- + \exp$ and $A \subseteq M$. Then
  \begin{enumerate}
   \item If $M \cup \{A\} \models \IDnp$ then $M[A'] \models \IDn$.
   \item If $M \cup \{A\} \models \ISnp$ then $M[A'] \models \ISn$.
  \end{enumerate}
 \end{corollary}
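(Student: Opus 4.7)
The plan is to chain Propositions~\ref{prop:2.3} and~\ref{prop:2.4} together with a short parameter-absorption step, exploiting that both the hypothesis and conclusion drop by exactly one in complexity when we pass from $A$ to $A'$. Since $n \geq 1$, the hypothesis $\ISnp$ (respectively $\IDnp$) relative to $A$ immediately entails $\ISo$ relative to $A$, so Proposition~\ref{prop:2.4} applies and identifies the $\Sigma_n^{A'}$ sets with the $\Sigma_{n+1}^A$ sets, and the $\Delta_n^{A'}$ sets with the $\Delta_{n+1}^A$ sets.

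Next I would check that $M[A'] \models \RCAstar$ so that the conclusion is well-posed. For this one applies Proposition~\ref{prop:2.3}(ii) with $A'$ playing the role of $A$; its required hypothesis $\IDo$ relative to $A'$ translates via a single application of Lemma~\ref{lem:2.2} (or equivalently Proposition~\ref{prop:2.4} at level $n=1$) into $\IDt$ relative to $A$, which the assumption supplies.

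With those pieces in place, the main claim is almost tautological. Any $\Sigma^0_n$ set of $M[A']$ is by definition $\Sigma_n^B$ for some $B \in M[A']$, where $B$ is $\Delta_1^{A'}$. Absorbing $B$ into $A'$ rewrites the set as $\Sigma_n^{A'}$; Proposition~\ref{prop:2.4} converts it to $\Sigma_{n+1}^A$; and the hypothesis $\ISnp$ forces it to be regular. This gives $M[A'] \models \ISn$, and the $\IDn$ case is identical with $\Delta$ in place of $\Sigma$ throughout.

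The one step that is not entirely immediate is the parameter absorption at complexity $n$, which I expect to be the main technical obstacle. It can be handled by induction on the quantifier rank, using the fact that a $\Delta_1^{A'}$ matrix admits both a $\Sigma_1^{A'}$ and a $\Pi_1^{A'}$ form and can therefore be folded into the innermost quantifier of a $\Sigma_n^B$ definition without raising the alternation depth. The bookkeeping mirrors the manipulations already carried out in the proof of Proposition~\ref{prop:2.4}, so I would invoke it briefly rather than reprove it.
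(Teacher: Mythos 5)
Your proposal is correct and follows the route the paper intends, namely chaining Post's Theorem (Proposition~\ref{prop:2.4}) with the extension machinery of Proposition~\ref{prop:2.3}, while correctly isolating the parameter-absorption step (that a $\Sigma_n^B$ set with $B$ being $\Delta_1^{A'}$ is in fact $\Sigma_n^{A'}$) as the only nontrivial bookkeeping, and handling it by the same fold-into-the-innermost-quantifier device used in Lemma~\ref{lem:2.2} and Proposition~\ref{prop:2.4}.
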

 This corollary can also be iterated to obtain models $\CM[A'']$ and so on, with the expected levels of induction.

\subsection{The Friedberg jump theorem} \label{sec:2.2}

We will formalize the following well-known theorem of recursion theory.
\begin{theorem*}[Friedberg]
 If $\bc$ is a Turing degree, there is a Turing degree $\bb$ such that $\bb' = \bzero' \oplus \bc$.
\end{theorem*}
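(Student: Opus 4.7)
The plan is to carry out Friedberg's classical finite-extension argument, formalized inside the model. Given $C \subseteq M$, I build $B$ as the union of an increasing sequence $\sigma_0 \subsetneq \sigma_1 \subsetneq \cdots$ of finite binary strings, defined by recursion on $s$. At stage $s$, having $\sigma_s$ in hand, I consult the $\Sigma_1$ question whether some $\tau$ extending $\sigma_s \frown 0$ satisfies $\Phi_s^\tau(s)\halts$; this question is decidable from $\bzero'$. If the answer is yes, I set $\sigma_{s+1}^-$ to be the least such $\tau$, thereby forcing $s \in B'$; if no, I set $\sigma_{s+1}^- = \sigma_s \frown 0$, which forces $s \notin B'$ because no further extension can then make $\Phi_s(s)$ halt. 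In either case, I append the $s$-th bit $c_s$ of $C$ to form $\sigma_{s+1} = \sigma_{s+1}^- \frown c_s$, recording $c_s$ at a position that depends only on the yes/no pattern up through stage $s$.

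Next I verify $B' \equiv_T \bzero' \oplus C$. For $B' \leqT \bzero' \oplus C$: the sequence of yes/no decisions and the resulting $\sigma_s$ are jointly $(\bzero' \oplus C)$-computable---the yes/no answers come from $\bzero'$ and the $c_s$ from $C$---and $s \in B'$ iff stage $s$ is a yes-stage. For the converse, $\bzero' \leqT B'$ is automatic; and $C \leqT B'$ because $B'$ alone reveals the yes/no pattern, allowing one to reconstruct each $\sigma_s$ and then read $c_s$ off $B \leqT B'$ at its known coding position.

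To formalize inside the framework of Section \ref{sec:2}, I relativize to a set $A$ topping $\CM$ and carry out the construction using $A'$ in place of $\bzero'$. The output $B$ is $(A' \oplus C)$-computable by construction, so by Lemma \ref{lem:2.2} every $\Delta^0_1$ set of $\CM[B]$ is $\Delta_1^{A' \oplus C}$; conversely the above arguments give $A' \oplus C \equiv_T B'$, and Proposition \ref{prop:2.3} together with the corollary to the formalized Post's theorem (Proposition \ref{prop:2.4}) will transport the appropriate levels of induction from $\CM \cup \{A,C\}$ down to $\CM[B]$.

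The main obstacle will be bookkeeping the induction: the stage-by-stage construction must produce a regular $B$, which requires $\ISo$ (or, in the weaker $\RCAstar$ setting, $\IDo$ together with $\BSo$) relative to $A'$; and showing that $\CM[B]$ satisfies the intended $\BSt$ or $\ISt$ requires one further Post-theorem transfer. The combinatorial core is a faithful transcription of Friedberg's classical argument, so the work lies entirely in checking that each step survives passage to a model in which only weak induction is available.
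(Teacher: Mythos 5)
Your construction is the classical Friedberg finite-extension argument: at stage $s$, consult $A'$ to decide whether some $\tau\supseteq\sigma_s$ forces $\Phi_s^{A\oplus\tau}(s)\halts$, take the least such $\tau$ if so, and append a bit of $C$. The trouble is that you flag the regularity/totality issue and then assert it will go through from ``$\ISo$ relative to $A'$,'' but that is exactly the step that fails. When stage $s$ receives a ``yes'' answer, the construction must search for the witness $\tau$, and nothing bounds $|\tau|$ computably in $s$. Consequently the oracle queries made by stage $s$ probe $A'\oplus C$ arbitrarily far out, and the statement ``$\sigma_s$ is defined'' is genuinely $\Sigma_1^{A\oplus A'\oplus C}$ with an unbounded existential over the (possibly huge) construction sequence. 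To conclude by induction that $\sigma_s$ is defined for all $s\in M$---rather than only on a proper cut---you would need $\ISo$ relative to $A'\oplus C$, which is not available: the hypothesis of Lemma~\ref{lem:2.6} is only that $A'\oplus C$ is regular and $M\models\ISo$, and regularity of $A'\oplus C$ gives $\IDo^{A'\oplus C}$ but not $\ISo^{A'\oplus C}$. If the construction lives only on a cut, $B$ is a bounded set and the lemma is lost. This is precisely why the paper's proof says it is ``less straightforward than Friedberg's original, in order to accommodate the relatively weak axiom systems involved.''

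The paper avoids your gap by splitting the construction into two alternating strategies so that every stage makes only a \emph{computably bounded} query to $A'\oplus C$. Strategy $(A\oplus B)'$ commits to a prediction $\tau_s$ of length $s$ by asking $A'$ about it (the index of the query is bounded by a computable function of $s$ alone, since $|\sigma_s|\leq 2s$ and $|\tau_s|\leq s$); Strategy $B$ then waits, searching only over strings of length $s$ and $s$-step computations, until the prediction is confirmed. Because the use of the regular oracle $A'\oplus C$ is computably bounded, the first $s_0$ stages are computed from $A$ together with the finite object $(A'\oplus C)\rest f(s_0)$, so ``$\sigma_s$ defined'' becomes (essentially) $\Delta_0^A$ with a finite parameter, and $\ISo^A$---which \emph{is} available, since $A'$ is regular---yields totality of $s\mapsto(\sigma_s,\tau_s)$ and hence regularity of $B$ and of $(A\oplus B)'$. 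Your proposal would need this same bounded-use device (or some other argument supplying the missing induction) before the ``the work lies entirely in checking that each step survives'' sentence can be cashed out; as written, the step you identify as the main obstacle is not just bookkeeping but the actual mathematical content of the lemma.
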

This is sometimes called a \emph{jump inversion theorem}, because it provides a one-sided inverse to the jump operator $\bb \mapsto \bb'$, or a \emph{completeness criterion}, because it characterizes the Turing-complete degrees, i.e.\ those above $\bzero'$.  The proof we give below has the advantage that it works for uncountable models. On the other hand, it also requires extra hypotheses about the model and about the set $C$ (correponding to $\bc$). Towsner in \cite{towsner:2015:OMC} has recently proved a version for $\ISn$ that swaps these extra hypotheses out in favour of a requirement that the model $\CM$ be countable (see Appendix \ref{app:B} for details and a proof). We begin with a weak version, and then give a more complete version as Proposition \ref{prop:FriedbergFull}
\begin{lemma}\label{lem:2.6}
 Let $M \models P^- + I\Sigma_1$, and suppose $A'\oplus C$ is regular. Then there is a $B$ such that $A' \oplus C$ and $(A \oplus B)'$ are $\Delta_1$ relative to each other and $(A \oplus B)'$ is regular; in particular, $M[A \oplus B] \models \ISo$. Furthermore, if $M[A' \oplus C] \models \BSo$ then $M[A \oplus B] \models \BSt$.
\end{lemma}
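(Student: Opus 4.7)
The plan is to formalize Friedberg's classical jump-inversion construction, producing $B$ with $(A \oplus B)' \equiv_1 A' \oplus C$, and then to deduce the induction claims via Post's theorem (Proposition~\ref{prop:2.4}) and Slaman's equivalence $\IDn \leftrightarrow \BSn$. First I would build $B = \bigcup_e \sigma_e$ stage by stage with $\sigma_0 = \emptyset$. Given $\sigma_e$, use $A'$ to ask whether some $\tau \supseteq \sigma_e$ satisfies $\Phi_e^{A \oplus \tau}(e)\halts$; if yes, let $\sigma'_e$ be a canonical such $\tau$ (say lex-least of minimum length, locatable from $A'$); if no, let $\sigma'_e = \sigma_e{\cdot}0$. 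Then set $\sigma_{e+1} = \sigma'_e{\cdot}C(e)$, so that the $e$-th bit of $C$ is coded into $B$ at a predictable position. Since each stage appends at least one bit, $B\rest n = \sigma_n$, and producing $\sigma_n$ uses only a bounded initial segment of $A'\oplus C$; regularity makes every such segment coded.

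From the construction one reads off both reductions. $B$ is $\Delta_1^{A'\oplus C}$ because the map $e \mapsto \sigma_e$ is. The halting status of $\Phi_e^{A\oplus B}(e)$ is fixed at stage $e$---either it is witnessed by $\sigma'_e \subseteq B$, or it is forbidden since no extension of $\sigma_e \subseteq B$ can halt---so $(A\oplus B)' \leq_1 A'\oplus C$. Conversely $A' \leq_1 (A\oplus B)'$ is immediate, and $C$ is recoverable from $(A\oplus B)'$ by replaying the construction: the jump tells us which branch was taken at each $e$, whence we compute $|\sigma'_e|$ and read $C(e) = B(|\sigma'_e|)$. Thus $(A\oplus B)'$ is $\Delta_1$ in the regular set $A'\oplus C$ and is therefore itself regular. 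Every $\Sigma_1^{A\oplus B}$ set is $\Delta_1^{(A\oplus B)'}$ via the usual jump encoding, hence has coded initial segments; Proposition~\ref{prop:2.3} promotes this to $M[A\oplus B] \models \ISo$.

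For the final clause, assume $M[A'\oplus C] \models \BSo$. Mutual $\Delta_1$-equivalence gives $M[(A\oplus B)'] = M[A'\oplus C]$, so this model satisfies $\BSo$, equivalently $\IDo$. Post's theorem, applied in $M[A\oplus B]$---now legitimately a model of $\ISo$---identifies $\Delta_2^{A\oplus B}$ with $\Delta_1^{(A\oplus B)'}$; since any $X \in M[A\oplus B]$ is $\Delta_1^{A\oplus B}$, every $\Delta_2^X$ set is already $\Delta_2^{A\oplus B}$, hence $\Delta_1^{A'\oplus C}$, hence regular by the $\BSo$ hypothesis. This delivers $\IDt$ in $M[A\oplus B]$ and thus $\BSt$ by Slaman's theorem. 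The place I expect to work hardest is Step~1, the rigorous formalization of the stage-by-stage construction inside a model of only $P^- + I\Sigma_1$: one must verify that ``the sequence $\langle\sigma_0,\ldots,\sigma_e\rangle$ produced by the construction'' is described by a $\Sigma_1^{A'\oplus C}$ formula whose totality and uniqueness the available induction can prove, drawing only on regularity of $A'\oplus C$ rather than on any stronger induction. Everything thereafter is a routine-but-careful assembly of Lemma~\ref{lem:2.2}, Propositions~\ref{prop:2.3} and~\ref{prop:2.4}, and the $\IDn$/$\BSn$ equivalence.
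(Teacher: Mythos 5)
Your plan is the classical one-pass Friedberg construction: at stage $e$, consult $A'$ once to decide whether $\Phi_e^{A\oplus\cdot}(e)$ can be forced to converge, search for a minimal witnessing extension if so, and then code the bit $C(e)$. This is \emph{not} the construction the paper uses, and the difference matters precisely at the point you flag as the hard step. The paper explicitly warns that its proof ``is less straightforward than Friedberg's original, in order to accommodate the relatively weak axiom systems involved,'' and the extra complication is aimed squarely at the gap in your Step~1.

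The difficulty is that your step function involves an \emph{unbounded} search: having learned from $A'$ that some $\tau\supseteq\sigma_e$ gives convergence, finding the lex-least such $\tau$ of minimum length requires a search of no computably bounded length, so the map $e \mapsto \sigma_e$ is Turing- but not wtt-reducible to $A'\oplus C$. Consequently the statement ``the construction is defined at stage $s$'' does not reduce to a bounded amount of information about $A'\oplus C$; written out, the defining formula for the finite run $\langle\sigma_0,\ldots,\sigma_s\rangle$ involves, at each step, both a $\Sigma^A_1$ case (a witness exists) and a $\Pi^A_1$ case (no witness exists), landing you around $\Sigma^A_3$. Proving totality of that recursion needs something like $\ISo$ relative to $A'\oplus C$, which you do not have: the hypothesis is only that $A'\oplus C$ is regular (plus $\ISo$ in $M$), and regularity of a single set is far weaker than $\Sigma_1$ induction relative to it. Indeed, $(A'\oplus C)'$ itself could well fail to be regular --- the whole point of the lemma is to produce a $B$ whose jump \emph{is} tame, not to assume that jumps of $A'\oplus C$ are.

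The paper's construction avoids this by building $B$ and $(A\oplus B)'$ simultaneously, alternating two strategies, and crucially bounding every search: Strategy~$B$ looks only at strings of length exactly $s$ with computations $\Phi_{e,s}$ run for exactly $s$ steps, so each stage is a $\Delta_0$ operation, and Strategy~$(A\oplus B)'$ consults $A'$ at computably predetermined positions. The upshot is that the map $s\mapsto(\sigma_s,\tau_s)$ is wtt in $A'\oplus C$. Then, to prove totality, one fixes $t$, uses regularity of $A'\oplus C$ to obtain a \emph{code} for the finite initial segment $(A'\oplus C)\rest g(t)$ (where $g$ is the computable use bound), passes that code in as a first-order parameter, and observes that the recursion up to stage $t$ is now $\Delta_0$; $\ISz$ in $M$ finishes it. None of this is available for your construction because there is no computable bound $g$. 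The remainder of your argument --- reading off both reductions, using Lemma~\ref{lem:2.2}/Propositions~\ref{prop:2.3},~\ref{prop:2.4} and the $\IDn\leftrightarrow\BSn$ equivalence to transfer induction --- is correct once you have a total, wtt construction, but as it stands, Step~1 is a genuine gap rather than just a point of extra care.
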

Our proof is less straightforward than Friedberg's original, in order to accommodate the relatively weak axiom systems involved.
\begin{proof}
 We build $B$ and $(A \oplus B)'$ by initial segments $\sigma_0 \subseteq \sigma_1 \subseteq \cdots \subseteq B$ and $\tau_0 \subseteq \tau_1 \subseteq \cdots \subseteq (A \oplus B)'$. We proceed by stages $s\in M$. At each stage $s>0$ we are performing either \emph{Strategy $B$} to build $B$, or in \emph{Strategy $(A \oplus B)'$} to build the jump.

 Stage $s=0$. Let $\sigma_0$ and $\tau_0$ be the empty string. Change to Strategy $B$ and go to the next stage $s+1$.

 Strategy $B$. Let $\tau_s = \tau_{s-1}$. Check whether there is a string $\sigma$ of length $s$ extending $\sigma_{s-1}$ satisfying $(\forall e < |\tau_s|)[\tau_s(e) = 0 \rightarrow \Phi_{e,s}^{A \oplus \sigma}(e) \halts]$.  If so, let $\sigma_s$ be the concatenation of the lexicographically-least such $\sigma$ and the initial segment $C \rest s$ (viewing $C$ as a binary string), and change to Strategy $(A \oplus B)'$. Otherwise let $\sigma_s = \sigma_{s-1}$. Go to stage $s+1$.

 Strategy $(A \oplus B)'$. Let $\sigma_s = \sigma_{s-1}$. Let $\tau_s$ be the lexicographically-least $\tau$ of length $s$ extending $\tau_{s-1}$ and satisfying $(\exists \sigma \supseteq \sigma_{s})(\exists t)(\forall e < s)[\tau(e)=0 \rightarrow \Phi^{A \oplus \sigma}_{e,t}(e) \halts]$. Change back to Strategy $B$ and go to the next stage $s+1$. This completes the construction.

 Now we make a few observations. The first is that this construction is effective in $A' \oplus C$, and at each stage $s$ looks only computably-far forwards in the oracle. This is because $\exp$ is all that's needed to form the set of strings of a given length, and there is a computable function $h$ satisfying $\Phi_e^{A \oplus \sigma}(e) \halts$ iff $h(x,\sigma) \in A'$.
  The second is that, as a result, $\sigma_s,\tau_s$ are defined for all $s \in M$, as opposed to just on a proper cut. The third is that for every stage $s$, there is a stage $t > s$ at which Strategy $B$ finds a proper extension $\sigma_t$. Then $\sigma_t$ has length $\geq t >s$ (from $C \rest t$); hence $B$ is well-defined as a subset of $M$, as opposed to just on a proper cut. The fourth is that we also have $|\tau_t| = t$ cofinally often, so that the set being constructed is well-defined and \emph{really is} the jump $(A \oplus B)'$; and in particular $(A \oplus B)'$ is $\Delta^{A' \oplus C}_1$. The fifth is that $C$ is $\Delta_1^{A' \oplus B}$, by using $A'$ and $B$ together to find the stages $s$ at which $\sigma_s \subseteq B$ increases in size, and then looking at the elements representing $C \rest s$. It follows that $(A' \oplus C)$ is $\Delta_1^{A' \oplus B}$ and hence $\Delta_1^{(A \oplus B)'}$.

  This completes the proof of the main part of the theorem. The `in particular' part follows because if $D$ is a $\Sigma_1^{A \oplus B}$ set then $D$ is $m$-reducible to $(A \oplus B)'$, a regular set, and hence is itself regular. The `furthermore' part is by Lemma \ref{lem:2.2}(ii).
\end{proof}
 A more general result follows when after relativizing to the appropriate $0^{(n)}$:
\begin{proposition}\label{prop:FriedbergFull}
 Let $M[A^{(n)}\oplus C] \models P^- +\exp + \ISz$ (resp.~$P^- + \exp + \BSo$) with $n \geq 1$. Then there is a $B$ such that $A^{(n)} \oplus C$ and $(A \oplus B)^{(n)}$ are $\Delta_{n}$ relative to each other, and $M[A \oplus B] \models \ISn$ (resp.~$\BSnp$).
\end{proposition}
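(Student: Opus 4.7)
The plan is to induct on $n$, using Lemma \ref{lem:2.6} both as the $n=1$ base case and as the key engine for the inductive step. For $n=1$ the statement of the Proposition reduces to Lemma \ref{lem:2.6} after verifying hypotheses: $M[A'\oplus C]\models P^- + \exp + \ISz$ immediately gives that $A'\oplus C$ is regular, and since any $\Sigma_1$-set in $M$ is $m$-reducible to a fixed jump (at or below $A'$), the regularity of $A'$ propagates to give $M\models \ISo$. The parenthetical $\BSo$ case is delivered by the ``furthermore'' clause of Lemma \ref{lem:2.6}.

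For the inductive step from $n$ to $n+1$, the trick is to rewrite $A^{(n+1)}=(A')^{(n)}$ and apply the inductive hypothesis with $A'$ in the role of $A$; the required hypothesis $M[(A')^{(n)}\oplus C]\models P^- + \exp + \ISz$ is just the one we are given. This yields a $B_1$ such that $(A'\oplus B_1)^{(n)}$ and $A^{(n+1)}\oplus C$ are $\Delta_n$ relative to each other, and $M[A'\oplus B_1]\models \ISn$ (resp.~$\BSnp$). Next, apply Lemma \ref{lem:2.6} with $B_1$ in the role of $C$: the needed regularity of $A'\oplus B_1$ comes from the inductive conclusion $\ISn \supseteq \ISz$ on $M[A'\oplus B_1]$. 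This yields $B$ with $(A\oplus B)'$ and $A'\oplus B_1$ $\Delta_1$ relative to each other. Because Turing-equivalence is preserved by the jump (iterated $n$ times within $M[A'\oplus B_1]$, where the corollary to Propositions \ref{prop:2.3}--\ref{prop:2.4} provides the induction required to make each iterated jump regular), we obtain $(A\oplus B)^{(n+1)}=((A\oplus B)')^{(n)}$ and $(A'\oplus B_1)^{(n)}$ $\Delta_1$-interreducible, and composition with the inductive $\Delta_n$-equivalence yields the desired $\Delta_{n+1}$-interreducibility between $(A\oplus B)^{(n+1)}$ and $A^{(n+1)}\oplus C$.

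It remains to upgrade the induction on $M[A\oplus B]$ from the mere $\ISo$ that Lemma \ref{lem:2.6} directly provides up to the claimed $\ISnp$. Since $(A\oplus B)'$ and $A'\oplus B_1$ are $\Delta_1$-interreducible, the classes $\Sigma_n^{(A\oplus B)'}$ and $\Sigma_n^{A'\oplus B_1}$ coincide, and the inductive conclusion $M[A'\oplus B_1]\models \ISn$ makes all such sets regular. One more application of Proposition \ref{prop:2.4} converts $\Sigma_{n+1}^{A\oplus B}$ into $\Sigma_n^{(A\oplus B)'}$, showing every $\Sigma^0_{n+1}$ subset of $M[A\oplus B]$ is regular, i.e., $M[A\oplus B]\models \ISnp$. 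The bounding case runs identically with $\BSnp$ in place of $\ISn$. The main technical hurdle is precisely this bookkeeping of induction levels as we shuttle between $M$, $M[A\oplus B]$, $M[(A\oplus B)']$, and $M[A'\oplus B_1]$: each use of Lemma \ref{lem:2.6} and of the inductive hypothesis must have its regularity and induction prerequisites met, which requires repeated appeal to Proposition \ref{prop:2.4} and its corollary to move induction one level at a time across the jump.
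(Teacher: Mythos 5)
The paper gives no proof of this proposition --- only the one-line remark that it follows ``after relativizing to the appropriate $0^{(n)}$,'' which is precisely the iteration of Lemma~\ref{lem:2.6} down the jump levels that your induction on $n$ unwinds to. Your argument is that intended proof written out, and the care you take in using Lemma~\ref{lem:2.2} and Propositions~\ref{prop:2.3} and~\ref{prop:2.4} to move the induction and bounding hypotheses across each application of Lemma~\ref{lem:2.6} (and to propagate $\Delta_1$-equivalence through the iterated jump) addresses exactly the bookkeeping that the paper's remark elides; it looks correct.
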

 Here $M$ is a first-order model, and $M[A^{(n+1)} \oplus C]$ is the second-order model topped by $A^{(n+1)} \oplus C$. We would be interested to know whether a theorem of this form holds for models that are not topped by a single set $A$.
\section{Conservation for Weak K\"{o}nig's Lemma} \label{sec:3}

\begin{proposition}[Simpson--Smith]\label{prop:3.1}
 $\RCAstar + \WKL$ is $\Pi^1_1$-conservative over $\RCAstar$.
\end{proposition}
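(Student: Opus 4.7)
The plan is the standard model-theoretic template for $\Pi^1_1$ conservation sketched in the introduction. Starting with a countable $\CM \models \RCAstar$, I build an ascending chain $\CM = \CM_0 \subseteq \CM_1 \subseteq \cdots$ of countable models of $\RCAstar$ all sharing the same first-order part, dovetailing over pairs $(A, T)$ with $A$ in the current model and $T$ a code for a $\Delta^A_1$ binary tree, so that every infinite such tree eventually receives a path. Taking $\CM^* = \bigcup_n \CM_n$ yields a model of $\RCAstar + \WKL$ whose first-order part equals that of $\CM$, which establishes the $\Pi^1_1$ conservation.

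The single nontrivial ingredient is: for any $\CM_n \models \RCAstar$, any $A \in \CM_n$, and any infinite $\Delta^A_1$ binary tree $T$, produce a path $P \subseteq T$ such that $\CM_n[A \oplus P] \models \RCAstar$. This is a formalized low basis theorem. I would construct $P$ as the leftmost path of a nested sequence $T = T_0 \supseteq T_1 \supseteq \cdots$ of $\Delta^A_1$ subtrees, where at stage $e$, $T_{e+1}$ is obtained from $T_e$ by committing, in whichever direction keeps $T_e$ infinite, on the $\Sigma^A_1$ event that $\Phi_e^{A \oplus \sigma}(e)\halts$ for cofinally many $\sigma \in T_e$. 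The resulting $P$ is low over $A$ in the sense that $(A \oplus P)'$ is $\Sigma^A_1$; by Post's Theorem (Proposition \ref{prop:2.4}) relativized to $A$, this is enough to conclude $\CM_n \cup \{A \oplus P\} \models \IDo$, whence Proposition \ref{prop:2.3}(ii) delivers $\CM_n[A \oplus P] \models \RCAstar$ as desired.

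The delicate point is carrying out the low basis construction inside a model that only satisfies $\ISz$ and $\exp$, not $\ISo$. As with Lemma \ref{lem:2.6}, the argument requires that the bookkeeping at each stage $e$ depends only on finite initial segments of the oracle, so that the stage function is $\Delta^A_1$ and can be defined without induction beyond $\ISz$; the conclusion that $P$ is defined on all of $M$ rather than on a proper cut then follows as in that lemma. A secondary concern is preservation of $\RCAstar$ at the limit model $\CM^*$, but any $\Delta^0_1$ set of $\CM^*$ arises already as $\Delta^B_1$ for some $B$ in some $\CM_m$, so its regularity is inherited from $\CM_m \models \RCAstar$, and the first-order axioms $P^- + \exp + \ISz$ are preserved automatically since the first-order part never changes.
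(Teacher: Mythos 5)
Your high-level template (dovetail, build an ascending chain of countable models of $\RCAstar$ with the same first-order part, take the union) matches the paper's proof of Proposition~\ref{prop:3.1} exactly. Where you diverge is in the single nontrivial ingredient — Lemma~\ref{lem:3.2} — and there your proposal has a real gap.

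The paper proves Lemma~\ref{lem:3.2} by an \emph{external} forcing: one enumerates, outside the model, all relevant $\Sigma_1$ formulas with parameters from $\CM$ in a sequence of length $\omega$, and builds a genuine $\omega$-indexed descending chain of $\Delta^0_1$ subtrees; the verification that $\CM\cup\{P\}\models\BSo$ is then a direct forcing argument using $\BSo$ in $\CM$ to bound tree heights. You instead propose an \emph{internal} low-basis forcing indexed by stages $e\in M$. There are three intertwined problems with this. First, the assertion that ``the stage function is $\Delta^A_1$'' is not correct: at each stage $e$ the construction must decide whether a certain $\Delta^A_1$ subtree of $T_e$ is infinite, which is a $\Pi^A_2$ (equivalently $\Pi^{A'}_1$) question and cannot be read off from finite initial segments of $A$. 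The whole sequence $(T_e)_{e\in M}$ is therefore only $\Delta^{A'}_1$, and proving it is total on $M$ (rather than on a proper cut) calls for $\Delta^{A'}_1$-induction over $A$, i.e.\ roughly $\IDt(A)$, which $\RCAstar$ does not give. The analogy to Lemma~\ref{lem:2.6} does not save you because that lemma explicitly assumes $\ISo$ and that $A'\oplus C$ is regular, both stronger than $\RCAstar$. Second, ``$(A\oplus P)'$ is $\Sigma^A_1$'' is too strong: if that held, then any $\Delta^{A\oplus P}_1$ set — in particular $P$ itself — would be $\Delta^A_1$ and hence already in $\CM$, which is absurd. What the low-basis construction actually yields is $(A\oplus P)'\leq_{wtt}A'$. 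Third, and decisively: even with the correct conclusion $(A\oplus P)'\leq_{wtt}A'$, the transfer of regularity via Lemma~\ref{lem:wtt} needs $A'$ to be regular, and that is precisely what $\RCAstar$ does \emph{not} prove — regularity of $\Sigma^A_1$ sets such as $A'$ is exactly the additional strength $\ISo$ provides. This is why the internal/superlow argument in Appendix~\ref{app:A} is carried out under the hypothesis $(M,\{A\})\models P^-+\exp+\ISo$, and why the paper states explicitly, at the start of Appendix~\ref{app:A}, that it does not know whether an internal forcing is possible for $\RCAstar$. Your proposal, as written, asserts that it is, without resolving any of these obstacles.
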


We present a proof of this proposition because it demonstrates the method of iterated extensions we use later in Section \ref{sec:5}, and because at any rate the following lemma is itself needed later on.
\begin{lemma}[Simpson--Smith]\label{lem:3.2}
 If $\CM \models \RCAstar$ is countable and $T$ is an infinite $\Delta^0_1$ binary tree then $T$ contains an infinite path $P$ such that $\CM[P] \models \RCAstar$.
\end{lemma}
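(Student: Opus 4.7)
The plan is an external low basis construction exploiting the countability of $\CM$. Enumerate $\CS = \{B_i : i \in \omega\}$ and schedule stages $\langle e, i \rangle$. We build a decreasing chain $T = T_0 \supseteq T_1 \supseteq \cdots$ of infinite $\Delta^0_1$ subtrees in $\CM$: at stage $s = \langle e, i \rangle$, consider $T_s^{\uparrow} = \{\sigma \in T_s : \Phi_e^{\sigma \oplus B_i}(e) \nohalts~\text{in}~|\sigma|~\text{steps}\}$, a $\Delta^{B_i}_1$-subtree of $T_s$, and externally set $T_{s+1}$ to $T_s^{\uparrow}$ when it is infinite (thereby forcing $e \notin (P \oplus B_i)'$) or otherwise to the cofinal subtree of $T_s$ on which $\Phi_e^{\sigma \oplus B_i}(e) \halts$ in $|\sigma|$ steps (forcing $e \in (P \oplus B_i)'$). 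Each $T_s$ is $\Delta^0_1$-definable from finitely many $B_j$ and hence lies in $\CS$; countability of $\CM$ then yields an infinite path $P \in \bigcap_s T_s$.

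To verify $\CM[P] \models \RCAstar$, the first-order axioms $P^-$, $\exp$, $\ISz$ and the axiom $\Deltacomp$ are automatic, so it suffices to check $\IDo$. Every $\Delta^0_1$ set of $\CM[P]$ is $\Delta^{P \oplus B_i}_1$ for some $B_i \in \CS$ and is $m$-reducible to $(P \oplus B_i)'$, so we show that $(P \oplus B_i)'$ is regular. For each $n$ the initial segment $(P \oplus B_i)' \cap [0,n)$ is determined by $P \oplus B_i$ together with the finite join $Y_n = \bigoplus_{e < n} T_{\langle e, i \rangle + 1}$, which lies in $\CS$: the bit at index $e$ equals $0$ precisely when $T_{\langle e, i \rangle + 1}$ is a subtree of $T_{\langle e, i \rangle}^{\uparrow}$ and $P$ lies in $T_{\langle e, i \rangle + 1}$, and equals $1$ otherwise, witnessed by a finite initial segment of $P$ giving convergence. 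Thus the initial segment is $\Delta^{P \oplus B_i \oplus Y_n}_1$, hence finite in $\CM[P]$, so $(P \oplus B_i)'$ is regular and with it every $\Delta^{P \oplus B_i}_1$ set.

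The main technical difficulty is arranging that $Y_n \in \CS$ for every $n$ and that the case distinction at stage $\langle e, i \rangle$ can be recovered from the trees themselves rather than from external choices; this relies on $\CS$ being closed under finite joins and on each constructed $T_s$ being $\Delta^0_1$ in $\CM$ from a finite initial segment of the $B_j$-enumeration. Once this bookkeeping is in place, the regularity argument transfers from $\CM$ to $\CM[P]$ uniformly over all $B_i \in \CS$, giving $\CM[P] \models \RCAstar$.
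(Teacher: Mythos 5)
Your plan of establishing regularity of $(P\oplus B_i)'$ is both too strong and not achieved. It is too strong because regularity of $(P\oplus B_i)'$ for every $B_i\in\CS$, together with the $m$-reduction of $\Sigma_1^{P\oplus B_i}$ sets to $(P\oplus B_i)'$ and Lemma~\ref{lem:wtt}, would yield $\CM[P]\models\ISo$, i.e.\ $\CM[P]\models\RCA$; this is impossible whenever $\CM\models\RCAstar+\neg\ISo$, since any non-regular $\Sigma^0_1$ set of $\CM$ remains a non-regular $\Sigma^0_1$ set of $\CM[P]$. (The internal superlow-basis construction of Appendix~\ref{app:A} does make $(A\oplus P)'$ regular, but it assumes $\ISo$ from the start.) It is not achieved because your regularity argument addresses only standard cuts: the stages $s=\langle e,i\rangle$ range over $\omega$, so $Y_n=\bigoplus_{e<n}T_{\langle e,i\rangle+1}$ is an element of $\CS$ only for standard $n$, where $(P\oplus B_i)'\cap[0,n)$ is already trivially finite; regularity is a statement about nonstandard $n$, for which the construction has produced no trees and decided nothing.

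The paper's proof targets $\BSo$ (equivalently $\IDo$, which is all $\RCAstar$ requires) instead of $\ISo$. It enumerates the countably many $\Sigma_1$ formulas $\phi_n^Y(x)$ with parameters from $\CM$, at stage $n$ forces $\neg\phi_n^P(x)$ for some $x$ whenever $U_{n,x}\cap T_n$ is infinite, and then verifies $\BSo$ for a fixed $\phi_n$ and an arbitrary $a\in M$: the forcing makes $U_{n,x}\cap T_n$ finite for each $x<a$, and since $U_{n,x}$ and $T_n$ are $\Delta^0_1$ sets of $\CM$ \emph{uniformly in $x$}, a single application of $\BSo$ \emph{inside $\CM$} bounds all their heights at once, which bounds the witnesses. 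The crucial leverage you are missing is that the only $M$-indexed quantity in that argument (the parameter $x<a$) is handled by $\BSo$ already available in the ground model, whereas your argument tries to read an $M$-long object, the jump $(P\oplus B_i)'$, out of an $\omega$-long forcing, which cannot reach the nonstandard part. (A smaller, fixable point: your criterion ``$T_{\langle e,i\rangle+1}$ is a subtree of $T_{\langle e,i\rangle}^\uparrow$'' is $\Pi_1$, not $\Delta_1$; the decidable criterion is the existence of $\sigma\in T_{\langle e,i\rangle+1}$ with $\Phi_{e,|\sigma|}^{\sigma\oplus B_i}(e)\halts$, which the forcing makes equivalent to $e\in(P\oplus B_i)'$.)
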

\begin{proof}
 Working outside of the model, enumerate the $\Sigma_1$ formulas $\psi^Y(x)$ with first-order variable $x$, second-order variable $Y$, and first- and second-order parameters from $\CM$, as $(\phi_n)_{n < \omega}$.  (As usual, $\omega$ represents the true natural numbers. In particular, $(\phi_n)_{n<\omega}$ is not required to be an effective enumeration.) For each $n < \omega$ and $x \in M$, let
  $$U_{n,x} = \{\sigma \in 2^{<M} : \neg \phi_n^\sigma(x)\}.$$
 Now define $T_0 = T$, and $T_{n+1} =$ either $T_n \cap U_{n,x}$ if for some $x$ this is infinite, or $T_{n+1} = T_n$ otherwise. Then each $T_n$ is an infinite $\Delta^0_1$ tree in $\CM$. It is easy to see that their intersection $\bigcap_n T_n$ is a single path; call it $P$. Certainly $P$ is regular; we claim that $\CM \cup \{P\}$ satisfies $\BSo$. Fix any $a \in M$ and any $\Sigma_1$ formula $\phi_n^Y(x)$ with first- and second-order parameters from $\CM$ such that $(\forall x < a)(\exists y)\phi_n^P(x,y)$ holds. Then by construction $U_{n,x} \cap T_n$ is finite for each $x < a$. By $\BSo$ in $\CM$, there is an upper bound on their heights. This gives an upper bound on the witnesses $y$, proving the claim. We conclude by Proposition \ref{prop:2.3} that $\CM[P] \models \BSo$.
\end{proof}

\begin{proof}[Proof of Proposition \ref{prop:3.1}.]
 Suppose for a contradiction that there is an arithmetical formula $\phi(X)$ of a single second-order variable such that $\RCAstar + \WKL \proves (\forall X) \phi(X)$ but $\RCAstar \not \proves (\forall X) \phi(X)$. Let $\CM$ be a countable model of $\RCAstar + (\exists X) \neg \phi(X)$, say with witness $\neg \phi(A)$. By iterating Lemma \ref{lem:3.2}, expand $\CM$ to a model $\CM^* \models \RCAstar + \WKL$ with the same order part, and with a second-order part containing that of $\CM$. Since $\neg \phi(A)$ still holds in $\CM^*$, we have $\CM \models \RCAstar + \WKL + \neg (\forall X) \phi(X)$, contradicting the assumption that $\RCAstar + \WKL \proves (\forall X) \phi(X)$.
\end{proof}

We finish this section by stating a more general version of Lemma \ref{lem:3.2} that combines results from several sources. The earliest is the $\ISo$ case, due to Harrington \cite{harrington:77}; the second is the $\BSo$ case, which is the Simpson--Smith \cite{SS:1986:FOP} result above; and most recent are the remaining cases $\ISn$, $\BSn$ for all $n \geq 2$, first published by Hajek \cite{hajek:93:IAF}. We give alternative and, perhaps, simpler proofs of Harrington's and Hajek's cases in the Appendix.
\begin{lemma}[Harrington; Simpson--Smith; Hajek]
 Fix $n \geq 1$. Suppose $\CM \models \RCAstar$, and $T$ is an infinite binary $\Delta^0_1$ tree.
  \begin{enumerate}
   \item If $\CM \models \BSn$, there is an infinite path $P$ such that $\CM[P] \models \BSn$.
   \item If $\CM \models \ISn$, there is an infinite path $P$ such that $\CM[P] \models \ISn$.
  \end{enumerate}
\end{lemma}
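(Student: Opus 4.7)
Plan:

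The cases $n=1$ are Lemma \ref{lem:3.2} (Simpson--Smith) and Harrington's theorem, both given alternative proofs in Appendix \ref{app:A}. For $n \geq 2$, due to Hajek, my strategy would be to reduce to the $n=1$ case by first forming an auxiliary model in which the induction level has dropped to $\BSo$ (resp.~$\ISo$), then applying Lemma \ref{lem:3.2} (resp.~Harrington's theorem) inside it, and finally pushing the result back down using iterated Post's theorem together with a low-basis condition on the path.

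Let $\CM \models \RCAstar + \BSn$ (resp.~$\ISn$) be countable and $T$ an infinite $\Delta^0_1$ binary tree. First I would construct an auxiliary extension $\CM_1 \models \RCAstar + \BSo$ (resp.~$\RCA$) by enumerating the second-order part of $\CM$ as $A_0, A_1, \ldots$, forming the finite joins $B_k = A_0 \oplus \cdots \oplus A_k \in \CM$, and setting $\CM_1 = \bigcup_k \CM[B_k^{(n-1)}]$. Iterated application of the Corollary to Proposition \ref{prop:2.4} shows that each $\CM[B_k^{(n-1)}] \models \BSo$ (resp.~$\ISo$), and since $\BSo$ (resp.~$\ISo$) is preserved under increasing unions of models, $\CM_1$ inherits it; the tree $T$ remains infinite and $\Delta^0_1$ in $\CM_1$.

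Next I would apply Lemma \ref{lem:3.2} (or Harrington's theorem) inside $\CM_1$ together with a relativized low basis theorem to obtain an infinite path $P$ through $T$ satisfying simultaneously (a) $\CM_1[P] \models \BSo$ (resp.~$\ISo$), and (b) $P$ is \emph{low over every $A \in \CM$}, meaning $(A \oplus P)'$ is $\Delta_1^{A'}$ for each $A \in \CM$. Both (a) and (b) are expressible as countably many density requirements against the forcing of infinite $\Delta^0_1$-in-$\CM_1$ subtrees of $T$, and can be satisfied together by a standard combined construction.

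For the verification, fix any $\Sigma^0_n$ set $B$ in $\CM[P]$, say $B$ is $\Sigma_n^{A \oplus P}$ for some $A \in \CM$. By iterated Post's theorem (Proposition \ref{prop:2.4}), $B$ is $\Sigma_1^{(A \oplus P)^{(n-1)}}$; iterating the lowness in (b) via the fact that $X \leq_T Y$ implies $X' \leq_T Y'$ yields $(A \oplus P)^{(n-1)} \leq_T A^{(n-1)}$, so $B$ is $\Sigma^0_1$ over $A^{(n-1)} \in \CM_1 \subseteq \CM_1[P]$ and hence regular by (a). This gives $\CM[P] \models \BSn$ (resp.~$\ISn$). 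The main obstacle I anticipate is in the combined construction: fusing Simpson--Smith (or Harrington) genericity with iterated low-basis requirements in a single forcing argument over infinite $\Delta^0_1$-in-$\CM_1$ subtrees of $T$ is a standard but delicate exercise in bookkeeping, requiring care that each thinning step preserves infinity of the current condition.
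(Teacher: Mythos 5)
Your plan takes a genuinely different route from the paper's, but I believe there is a fundamental obstruction at the combined-forcing step that you flag as `a standard but delicate exercise in bookkeeping'; it is in fact more than bookkeeping. The lowness requirement (b) asks that $(A \oplus P)'$ be $\Delta_1^{A'}$ for each $A \in \CM$. In the standard Jockusch--Soare argument this is achieved because the entire sequence of forcing conditions $T_0 \supseteq T_1 \supseteq \cdots$ is uniformly $\Delta_1^{A'}$: every condition is a $\Delta_1^A$ tree and each thinning decision is answered by a single query to $A'$. But your forcing poset consists of $\Delta^0_1$-in-$\CM_1$ subtrees of $T$, and $\CM_1$ contains sets of complexity up to $A^{(n-1)}$. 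Once a Simpson--Smith genericity requirement thins the current condition to something only $\Delta_1^{A^{(k)}}$ for some $k > 1$, the oracle $A'$ can no longer determine what the condition is, and hence cannot decide whether the subsequent lowness step for $(A, e)$ forced $\Phi_e^{A \oplus P}(e) \halts$ or not. There is no reordering of requirements that fixes this, because the genericity requirements with $\CM_1$-parameters must be met cofinally often to give $\CM_1[P] \models \BSo$, yet the lowness requirements for the \emph{same} $A$ must also be met cofinally often (one for each $e \in M$); so the two families inevitably interleave and the conditions are permanently pushed out of $A'$'s reach.

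There is also a second, smaller issue: even granting (b), the detour through $\CM_1$ is superfluous. If you have $(A \oplus P)'$ $\Delta_1$-equivalent to $A'$, then by Post's theorem (Proposition \ref{prop:2.4}, relativized) every $\Sigma_n^{A \oplus P}$ set is already $\Sigma_n^A$ and hence regular directly by $\ISn$ in $\CM$ --- you never need $\CM_1$ or its genericity in the verification at all. This is exactly what the paper does in Appendix \ref{app:A}: working in the topped model $(M,\{A\})$ with its full induction $\ISn$ (note $\ISn$ and $\BSnp$ both imply the $\ISo$ needed for Post's theorem, so there is no need to drop the induction level), the internal Jockusch--Soare forcing gives the Superlow Basis Theorem, i.e.\ $P' \leq_{wtt} A'$, and then the chain $\Sigma_{n+1}^{A\oplus P} \subseteq \Sigma_n^{(A\oplus P)'} \subseteq \Sigma_n^{A'} \subseteq \Sigma_{n+1}^A$ finishes the argument via $\ISnp$ (and analogously for $\BSnp$ via the $\Delta$-version). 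The whole point of using $\leq_{wtt}$ rather than $\leq_T$ is Lemma \ref{lem:wtt}, which lets $\BSo$ transfer regularity along a wtt-reduction. In short: what your plan is missing is that one should \emph{not} pass to a weaker auxiliary model $\CM_1$ --- the low-basis machinery needs to be run inside $\CM$ itself, where $A'$ controls the entire forcing, and the higher induction of $\CM$ is precisely what absorbs the jump in complexity at the end. A minor further slip: in the $\BSn$ case the verification should start from a $\Delta_n^{A\oplus P}$ set, not $\Sigma_n^{A\oplus P}$, and land on regularity via $\IDn$.
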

\begin{proof}
 In the Appendix.
\end{proof}

Each case of the Lemma yields a conservation result analogous to Proposition \ref{prop:3.1}; the method was introduced by Harrington. Some of the conservation theorems were later revisited by Avigad \cite{avigad:96:FFA} using proof-theoretic techniques, and by Wong \cite{wong:2017:IWK} using techniques from the model theory of Peano arithmetic.

\section{Characterizing $\COH$} \label{sec:4}

This section contains three lemmas which together prove Theorem \ref{thm:3}, characterizing $\COH$ over $\BSt$. Recall once again that $\RCAstar + \BSt$ is equivalent to $\RCA + \BSt$, and $\RCAstar + \ISo$ to $\RCA$.

\begin{lemma}\label{lem:TreesCOH}
 $\RCAstar + \BSt \proves$ `Every infinite $\Delta^0_2$ binary tree has an infinite $\Delta^0_2$ path' $\rightarrow \COH$.
\end{lemma}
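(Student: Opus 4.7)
The plan is to build a $\Delta^0_2$ tree $T$ of cohesive-direction strings, apply (iii) to obtain a path, and then use the path to produce an $\vec{R}$-cohesive set.

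Fix a uniformly $\Delta^0_1$ sequence $\vec{R} = \langle R_0, R_1, \ldots \rangle$ and write $R_k^1 := R_k$, $R_k^0 := \overline{R_k}$; for $\sigma \in 2^{<M}$ set $S_\sigma := \bigcap_{k<|\sigma|} R_k^{\sigma(k)}$. Define
\[
  T := \{\sigma \in 2^{<M} : (\exists x \geq |\sigma|)\, x \in S_\sigma\}.
\]
This formula is $\Sigma^0_1$ in a parameter for $\vec{R}$, and in particular $\Delta^0_2$. It is closed under initial segments because $\tau \preceq \sigma$ implies $S_\tau \supseteq S_\sigma$ and $|\tau| \leq |\sigma|$. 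It is infinite: for each $\ell$ and each $x \geq \ell$, the string $\sigma \in 2^\ell$ with $\sigma(k) = 1$ exactly when $x \in R_k$ lies in $T$.

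Apply (iii) to obtain an infinite $\Delta^0_2$ path $f$ through $T$. Every $S_{f \rest n}$ is then unbounded, since the tree condition at $f \rest m$ for $m \geq n$ exhibits an element of $S_{f \rest m} \subseteq S_{f \rest n}$ that is $\geq m$. Set $c_0 := 0$ and $c_{n+1} := \min\{x > c_n : x \in S_{f \rest (n+1)}\}$, and let $C := \{c_n : n \in M\}$. Then $C$ is infinite (as $c_n \geq n$) and $\vec{R}$-cohesive: for each $k$ and each $n > k$, $c_n \in S_{f \rest n} \subseteq R_k^{f(k)}$, so $C \setminus [0, c_k] \subseteq R_k^{f(k)}$.

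The main hurdle is showing $C \in \CS$, that is, that $C$ is $\Delta^0_1$ rather than merely $\Delta_1^f$ and so $\Delta^0_2$. I would tighten $T$ so that the witness $x$ at each node is forced into a $\Delta^0_1$-computable window $[|\sigma|, g(|\sigma|))$ for some fast-growing $g$ supplied by $\exp$; the standard halving/pigeonhole argument then still delivers infiniteness at every length. With bounded witnesses the $c_n$ are located by bounded search, and combining $\BSt$ (equivalently $\IDt$) with $\Delta^0_1$-comprehension should collapse the $\Sigma^0_2$-description of $C$ to a $\Delta^0_1$ one, placing $C$ into $\CS$. Designing the bounded-witness tree so that pigeonhole keeps producing extensions at every length is where I expect the main technical work.
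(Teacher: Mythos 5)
Your tree construction, and the observation that every $S_{P\rest n}$ is unbounded, are essentially the paper's. The gap---which you flag yourself---is in getting $C$ into $\CS$, i.e.\ $\Delta^0_1$ rather than merely $\Delta_1^P$, and the fix you sketch does not close it. Restricting witnesses to a $\Delta^0_1$-computable window does not help: to decide whether $x\in C$ you must still determine the first several bits of the path, which is $\Delta^0_2$ information, and no amount of $\BSt$ plus $\Delta^0_1$-comprehension turns a $\Sigma^0_2$ description into a $\Delta^0_1$ one. (Indeed the existence of $\Sigma^0_2$ sets that are not $\Delta^0_1$ is a theorem of $\RCA$.) Also note that your ``tightened'' tree may fail to be downward closed, since a witness for $\tau$ need not land in the smaller window for a prefix $\sigma\preceq\tau$, though this is fixable by taking a closure.

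The missing idea is to define $C$ from the $\Delta^0_1$ \emph{approximation} of the path rather than from the path itself. Recall that in this paper ``$\Delta^0_2$ path'' is by definition the pointwise limit $P(k)=\lim_s f(k,s)$ of a $\Delta^0_1$ function $f$. The paper lets $c_\ell$ be the least $s$ exceeding all earlier $c_k$ such that $(\forall k<\ell)[R_k(s)\leftrightarrow f(k,s)=1]$, with the approximation evaluated at the \emph{same} stage $s$ as the candidate; this makes the search $\Delta^0_1$, and $\ISo$ (available since $\RCAstar+\BSt$ is $\RCA+\BSt$) makes $c_\ell$ total, so $C\in\CS$. Existence of each $c_\ell$ is where $\BSt$ enters: there is a stage $s_0$ past which $f(k,\cdot)=P(k)$ for all $k<\ell$, and past $s_0$ the path property of $P$ supplies an $s$ with $R_k(s)\leftrightarrow P(k)=1=f(k,s)$. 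Cohesiveness then falls out: for $\ell$ large the $c_\ell$ are large, so $f(k,c_\ell)=P(k)$ and $R_k(c_\ell)\leftrightarrow P(k)=1$, giving $C\setminus[0,c_\ell]\subseteq R_k^{P(k)}$. In short, the correct move was not to bound the witnesses but to replace the limit value $P(k)$ by the stage-$s$ guess $f(k,s)$ in the defining search for $c_\ell$.
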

\begin{proof}[Proof (after Jockusch--Stephan).]
 Working in a model $\CM \models \RCAstar + \BSt$ satisfying the statement about trees, begin with $\vec R = \langle R_0,R_1,\ldots\rangle$ and define a tree $T$ by:
  $$\sigma \in T \iff (\exists^{> |\sigma|} x)(\forall k < |\sigma|)[R_k(x) \leftrightarrow \sigma(k) =1].$$
 This $T$ is $\Sigma^0_1$, and infinite (since for every $\ell$ there is a $\sigma \in T$ such that $|\sigma|=\ell$, by $\BSo$), thus $T$ has an infinite $\Delta^0_2$ path $P$. Now let $f : M \times M \rightarrow \{0,1\}$ be a $\Delta^0_1$ function such that $P(k) = \lim_s f(k,s)$ for all $k$, and construct a set $C = \{c_1, c_2, \ldots\}$ element-by-element:

 Stage $\ell$. Find the least $s$ such that $(\forall k<\ell)[c_k < s]$, while satisfying:
  $$(\forall k < \ell)[R_k(s) \leftrightarrow f(k,s) = 1],$$
 and let $c_\ell = s$. Such an $s$ must exist by $\BSt$, and since searching for the least such $s$ is a computable process, $\ISo$ implies that $c_\ell$ is defined for all $\ell \in m$ (as opposed to just a proper cut). Now $C$ is infinite, $\Delta^0_1$, and $\vec R$-cohesive, as desired.
\end{proof}

\begin{lemma}\label{lem:COHSep}
 $\RCAstar + \ISo \proves \COH \rightarrow$ The $\Sigma^0_2$ separation principle.
\end{lemma}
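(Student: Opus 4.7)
The plan is to adapt the Jockusch--Stephan observation that a cohesive set decides disjoint $\Sigma^0_2$ questions in a $\Delta^0_2$ fashion, formalizing it as a $\Delta^0_1$ ``race'' between the two $\Sigma^0_2$-approximations. Working in $\CM\models\RCAstar+\ISo+\COH$, I first write the given disjoint sets as $A_i=\{x:(\exists s)(\forall t)\,\psi_i(x,s,t)\}$ with each $\psi_i\in\Delta^0_0$ and parameters from $\CM$, and define a total $\Delta^0_1$ function $g:M\times M\to\{0,1,2\}$ by setting $g(x,u)=i+1$ when $(s,i)$ is the lexicographically least pair with $s\leq u$, $i<2$, and $(\forall t\leq u)\,\psi_i(x,s,t)$, and $g(x,u)=0$ when no such pair exists. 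Intuitively $g(x,u)$ announces which of $A_0,A_1$ currently holds the best candidate witness, with ties broken in favour of $A_0$.

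I then form the uniformly $\Delta^0_1$ sequence $\vec R=\langle R_{\langle x,i\rangle}:x\in M,\,i<3\rangle$ with $R_{\langle x,i\rangle}=\{u:g(x,u)=i\}$, apply $\COH$ to obtain an infinite $\vec R$-cohesive set $C$, and observe that, since the three sets $R_{\langle x,0\rangle},R_{\langle x,1\rangle},R_{\langle x,2\rangle}$ partition $M$ and $C$ is unbounded, for each $x$ there is a unique $j(x)\in\{0,1,2\}$ with $C\subseteq^* R_{\langle x,j(x)\rangle}$. The central computation is $x\in A_0\Rightarrow j(x)=1$: given $x\in A_0$ with least witness $s_0$, I would apply $\BSo$ (already available in $\RCAstar$) twice---first to the minimality of $s_0$, uniformly bounding the refutations of $(\forall t)\psi_0(x,s,\cdot)$ for $s<s_0$; second to disjointness, which (since $x\notin A_1$) uniformly bounds the refutations of $(\forall t)\psi_1(x,s',\cdot)$ for $s'<s_0$---obtaining a single $u_0$ past which the lex-least surviving pair must be $(s_0,0)$. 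Hence $g(x,u)=1$ for all $u\geq u_0$, giving $C\subseteq^* R_{\langle x,1\rangle}$ and $j(x)=1$. The implication $x\in A_1\Rightarrow j(x)=2$ is symmetric.

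Setting $D=\{x:j(x)=1\}=\{x:C\subseteq^* R_{\langle x,1\rangle}\}$, one obtains $A_0\subseteq D\subseteq\overline{A_1}$ from the previous step. For the $\Delta^0_2$ complexity, membership in $D$ is visibly $\Sigma^C_2$ by unfolding $\subseteq^*$, while its negation---$\Pi^C_2$ a priori---is equivalent, by cohesiveness, to the $\Sigma^C_2$-statement $C\subseteq^*\overline{R_{\langle x,1\rangle}}$; hence $D$ is $\Delta^0_2(C)\subseteq\Delta^0_2$. The main obstacle is exactly this last step: the obvious ``$A_0$-witness arrives first'' definition of a separator is easily seen to be $\Sigma^0_2$ but not $\Delta^0_2$, and it is the trichotomy $j(x)\in\{0,1,2\}$ together with cohesiveness that supplies the missing $\Pi^0_2$-side. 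The other detail to handle with care is ensuring that the $\BSo$-bound $u_0$ depends only on $s_0$ and not on the running $u$, so that the race truly stabilizes along $C$.
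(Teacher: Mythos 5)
Your proof is correct and follows the same Jockusch--Stephan formalization as the paper: both construct a $\Delta^0_1$ stage-function encoding the race between witnesses for $A_0$ and $A_1$, apply $\COH$ to the resulting sequence of relations, and then use the cohesiveness of $C$ to turn the naturally $\Sigma^0_2$ definition of the separator $D$ into a matching $\Pi^0_2$ one. The paper packages the race into a $\{0,1\}$-valued function $f$ with a single relation $R_x$ per $x$, whereas you use a three-valued $g$ with three relations per $x$; this is a mild presentational variant, and your version is somewhat more explicit about the construction of the stage-function and the $\BSo$ bookkeeping the paper leaves implicit.
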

\begin{proof}[Proof (after Jockusch--Stephan).]
Working in a model $\CM \models \RCAstar + \ISo + \COH$, fix any pair $A_0,A_1$ of disjoint $\Sigma^0_2$ sets. Let $f : M \times M \rightarrow \{0,1\}$ be a $\Delta^0_1$ function such that for each $i \in \{0,1\}$ and each fixed $x$, the set $\{s : f(x,s) = i\}$ is infinite iff $x \not \in A_i$.
 Now define a sequence $\vec R = \langle R_0,R_1,\ldots\rangle$ of unary relations by:
  $$R_x(s) \iff f(x,s) = 1.$$
 Apply $\COH$ to get an $\vec R$-cohesive set $C$. Now define a set $D$ by:
  $$x \in D \iff (\forall^\infty c \in C)f(x,c) = 1,$$
 or equivalently (because $C$ is $\vec R$-cohesive):
  $$x \not \in D \iff (\forall^\infty c \in C)f(x,c) = 0.$$
 Now $D$ is a $\Delta^0_2$ separating set for $A_0,A_1$, and the lemma is proved.
\end{proof}
\begin{lemma}\label{lem:SepTrees}
 $\RCAstar + \BSt \proves$ The $\Sigma^0_2$ separation principle $\rightarrow$ `Every infinite $\Delta^0_2$ binary tree has an infinite $\Delta^0_2$ path'.
\end{lemma}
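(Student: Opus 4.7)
The plan is to adapt the classical Friedman--Simpson--Smith proof that $\Sigma^0_1$ separation implies $\WKL$ one level up in the arithmetical hierarchy. Given a model $\CM \models \RCAstar + \BSt$ satisfying the $\Sigma^0_2$ separation principle and an infinite $\Delta^0_2$ binary tree $T$, I will first define, for $i \in \{0,1\}$,
\[
S_i = \big\{\sigma \in 2^{<M} : (\exists t > |\sigma|)\big[(\forall \tau \in 2^t)(\tau \notin T \vee \sigma^{\frown}(1-i) \not\preceq \tau) \wedge (\exists \tau \in 2^t)(\tau \in T \wedge \sigma^{\frown} i \preceq \tau)\big]\big\}.
\]
In words, $\sigma \in S_i$ certifies that at some stage $t$ the $(1-i)$-extension of $\sigma$ has died off in $T$ while the $i$-extension is still represented at level $t$. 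Because $T$ and its complement are each $\Sigma^0_2$, and $\BSt$ absorbs bounded universal quantifiers of $\Sigma^0_2$ formulas, each $S_i$ is $\Sigma^0_2$. Disjointness holds because $T$ is closed under prefixes: given witnesses $t_0 \leq t_1$ for $\sigma \in S_0 \cap S_1$, the length-$t_1$ element of $T$ extending $\sigma^{\frown}1$ would have a length-$t_0$ prefix still in $T$ and still extending $\sigma^{\frown}1$, contradicting the $S_0$ clause at stage $t_0$.

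Applying the $\Sigma^0_2$ separation principle to $S_0$ and $S_1$ yields a $\Delta^0_2$ set $D$ with $S_0 \subseteq D \subseteq \overline{S_1}$. I will then define $P \in 2^M$ by primitive recursion, setting $P(n) = 0$ if $P \rest n \in D$ and $P(n) = 1$ otherwise. Since $D$ is $\Delta^0_2$ and the recursion is primitive in $D$, the function $P$ is itself $\Delta^0_2$.

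The remaining step is to verify that $P$ is a path through $T$. The natural inductive invariant ``$T^{P \rest n}$ is infinite'' is $\Pi^0_2$, and $\BSt$ does not provide $\Pi^0_2$ induction, so I plan to argue by contradiction using a bounded auxiliary set. Suppose $P \rest n_0 \notin T$ for some $n_0$, and introduce
\[
\bar B = \{k \leq n_0 : (\exists \tau \in T \cap 2^{n_0})(P \rest k \preceq \tau)\}.
\]
Using $\BSt$ on both the positive and complementary bounded quantifiers, $\bar B$ has both $\Sigma^0_2$ and $\Pi^0_2$ forms, hence is $\Delta^0_2$; being also bounded, it is finite by Slaman's equivalence $\IDt = \BSt$. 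Since $T$ is infinite it has an element of length $n_0$, so $0 \in \bar B$. Let $k^*$ be its maximum; if $k^* = n_0$ then $P \rest n_0 \in T$, contradicting our assumption, so $k^* < n_0$. By maximality, no element of $T \cap 2^{n_0}$ extends $P \rest (k^*+1)$, while $k^* \in \bar B$ provides some $\tau \in T \cap 2^{n_0}$ extending $P \rest k^*$. This $\tau$ must then extend $P \rest k^*$ through the bit $1 - P(k^*)$, so reading off the definitions, $P \rest k^* \in S_{1-P(k^*)}$ with $t = n_0$ as witness. This contradicts $S_0 \subseteq D$ (when $P(k^*) = 1$) or $D \cap S_1 = \emptyset$ (when $P(k^*) = 0$).

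The main obstacle is this final verification, because the obvious invariant is $\Pi^0_2$ and we have no $\Pi^0_2$-induction inside $\BSt$. The key device is the bounded auxiliary set $\bar B$: $\BSt$ is exactly what is needed to place $\bar B$ in both its $\Sigma^0_2$ and $\Pi^0_2$ forms, and $\IDt$ then gives finiteness for free. This is what keeps the entire argument inside $\RCAstar + \BSt$ rather than requiring the stronger $\ISt$.
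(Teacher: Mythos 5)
Your proof is correct and takes essentially the same route as the paper's, namely lifting the Friedman--Simpson--Smith argument ($\Sigma^0_1$ separation implies $\WKL$) one level in the arithmetical hierarchy: build two disjoint $\Sigma^0_2$ sets of strings that encode ``the $0$-branch is the live one'' and ``the $1$-branch is the live one,'' separate them by a $\Delta^0_2$ set $D$, and follow $D$ to get the path. The paper's version formulates the separation pair via a $\Delta^0_1$ tree-approximation $(T_s)$ and a ``new elements entering at stage $t>s$'' condition, whereas you work directly with the $\Delta^0_2$ set $T$ via a ``one side has died off by level $t$'' condition; these are interchangeable encodings of the same idea. Where you add real value is the final verification that $P$ actually lies in $T$: the paper only asserts this, while you correctly identify that the natural invariant is too high in complexity for $\BSt$ and replace the induction with the finiteness of the bounded $\Delta^0_2$ set $\bar B$, locating the first wrong turn $k^*$ and contradicting $S_0 \subseteq D \subseteq \overline{S_1}$. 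That step is the one most likely to give a reader pause, and your treatment makes it explicit that the whole argument lives inside $\RCAstar + \BSt$.
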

\begin{proof}[Proof (after Friedman--Simpson--Smith).]
 Working in a model $\CM \models \RCAstar + \BSt$ that satisfies $\Sigma^0_2$ separation, begin with an infinite $\Delta^0_2$ binary tree $T$, say with approximation $(T_s)_{s \in M}$ where each $T_s$ is a tree, and define two $\Sigma^0_2$ sets $A_0,A_1$ of binary strings by:
  $$\sigma \in A_0 \iff (\exists s)[\mathrm{no~new}~\tau \supseteq \sigma 1~\mathrm{enter}~T_t~\mathrm{at~any~stage}~t>s,~\mathrm{but~a~new}~\tau \supseteq\sigma 0 ~\mathrm{does~enter}],$$
  $$\sigma \in A_1 \iff \mathrm{same,~but~with~}\sigma1~\mathrm{and}~\sigma0~\mathrm{switched.}$$
 These $A_0,A_1$ are $\Sigma^0_2$ and disjoint by definition. Let $D$ be a $\Delta^0_2$ set of binary strings such that $A_1 \subseteq D \subseteq \overline{A_0}$, and define a second set $P$ by:
  $$\sigma \in P \iff \sigma \mbox{~is~lex-least~of~length}~|\sigma|\mbox{~s.t.}~(\forall k < |\sigma|)[\sigma \rest k \in D \leftrightarrow \sigma(k)=1].$$
 This $P$ is $\Delta^0_2$ because $D$ is $\Delta^0_2$ and $\BSt$ holds, contains a string of every length by $\IDt$, and is totally-ordered also by $\BSt$. Thus $P$ describes an infinite $\Delta^0_2$ path through $T$.
\end{proof}

\section{Completing the proof} \label{sec:5}
 \begin{proof}[Proof of Theorem 1.]
  Suppose for a contradiction that $\RCAstar + \BSt + \COH \proves (\forall X) \phi(X)$ but $\RCAstar + \BSt \not \proves (\forall X)\phi(X)$ for some arithmetical formula $\phi$ with free second-order variable $X$. Choose a countable model $\CM = (M,\CS)$ of $\RCAstar + \BSt + (\exists X)\neg \phi(X)$. Let $A \in \CM$ be such that $\CM \models \neg \phi(A)$. By throwing some sets out of $\CS$, we may assume $\CM$ is topped by $A$. Let $T$ be an infinite binary tree which is $\Delta^A_2$. Then $T$ is $\Delta^{A'}_1$; and since $\CM[A'] \models \BSo$, by Lemma \ref{lem:3.2} there is an infinite path $P$ such that $\CM[A'][P] \models \BSo$. Then Lemma \ref{lem:2.6} provides a set $B$ such that $\CM[B] \models \BSt$ and according to which $P$ is $\Delta^0_2$. Treat this $\CM[B]$ as our new topped model, and repeat this procedre for the next $T$.

  Continue for $\omega$-many stages, while dovetailing to ensure that every $T$ is eventually dealt with. In the limit we are left with a model $\CN \models \RCAstar + \BSt$. Since in $\CN$ every infinite $\Delta^0_2$ binary tree has an infinite $\Delta^0_2$ path, we know by Theorem \ref{thm:3} that $\CN \models \COH$. And since $A$ is still an element of $\CN$, we still have $\CN \models (\exists X)\neg \phi(X)$, contradicting our starting assumption that $\RCAstar + \BSt + \COH \proves (\forall X)\phi(X)$.
\end{proof}

\section{Further questions}
We would like to know, first of all, whether the Cohesiveness Principle is also conservative over other systems. From the results in this and other papers, it is known to be conservative over $\RCAstar + \ISn$ and $\RCAstar + \BSn$ for every $n \geq 2$.

\begin{question}
 Is the Cohesivenes Principle $\Pi^1_1$ conservative over $\RCAstar$?
\end{question}
Our other questions are more programmatic. We have seen in Theorem \ref{thm:3} that, at least over $\BSt$, $\COH$ can be rephrased as a version of the `Big Five' principle $\WKL$. An earlier example along the same lines is in Chong, Lempp, and Yang \cite{CLY:2010:OTR}, where it is shown that the \emph{stable Ramsey theorem for pairs,} a strictly combinatorial statement, can be rephrased as a jump-inverting pigeonhole principle:
\begin{description}
 \item ($D^2_2$) If $f$ is a $\Delta^0_2$ total function from the natural numbers to $\{0,1\}$ then $f$ is constant on some infinite $\Delta^0_1$ set.
\end{description}
We would like to know just how common this phenomenon is.
\begin{question}
 What statements studied in reverse mathematics can be phrased more simply in terms on $\Delta^0_n$-definable sets? Does this bring any order to the zoo\footnote{\emph{The Reverse Mathematics Zoo} is a database and software package hosted by the University of Connecticut Department of Mathematics at {\tt http://rmzoo.math.uconn.edu/~.}} of reverse-mathematical principles?
\end{question}
Moving in the other direction, we can try to extend the analogy between $\WKL$, for $\Delta^0_1$ sets, and $\COH$, for $\Delta^0_2$ sets, to some other complexity. If we define an $n$-th analogue fo $\WKL$ by
\begin{description}
 \item ($n$-$\WKL$) Every infinite $\Delta^0_n$ binary tree has an infinite $\Delta^0_n$ path,
\end{description}
it is straightforward to generate theorems, for example, that for each set $A \subseteq \omega$ there is an $\omega$-model of $\bigwedge_{n \in A}n$-$\WKL \wedge \bigwedge_{n \not \in A} \neg n$-$\WKL$, or that $n$-$\WKL$ is $\Pi^1_1$-conservative over the appropriate axiom systems. But it is unclear whether these principles are of mathematical interest otherwise.
\begin{question}
 Is there any use for the principle, `Every infinite $\Delta^0_3$ binary tree has an infinite $\Delta^0_3$ path'?
\end{question}
Another sort of question concerns the choice of base system for reverse mathematics. It is at times necessary to work over $\RCAstar$ instead of $\RCA$. Our question is therefore not whether dropping to $\RCAstar$ can be useful---clearly it can---but whether that is dropping far enough.
\begin{question}
 Can one prove theorems of `ordinary' reverse mathematics (the kind over $\RCA$) by using systems weaker than $\RCAstar$? In particular, is there a good reason to weaken the axiom of $\Delta^0_1$ comprehension?
\end{question}

\appendix
\section{More conservation for Weak K\"onig's Lemma}\label{app:A}
In Section \ref{sec:3} we presented a version of Simpson and Smith's \cite{SS:1986:FOP} proof that $\WKL$ is $\Pi^1_1$-conservative over $\RCAstar$ and mentioned some proofs of Harrington, Hajek, Avigad, and Wong for similar conservation theorems over $\RCA$ and $\RCA + \BSnp$ for all $n \geq 1$.
Hajek's proof for $\ISo$ uses a notion that he calls \emph{very low sets} in \cite{hajek:93:IAF}, or \emph{low $\Sigma_0^*(\Sigma_1)$ sets} in \cite[A.2.d,A.3.b]{Hajek-Pudlak:1998:Metamathematics_of_first_order_arithmetic}. In this section we give a proof that is largely the same---indeed, largely the same as the original proof of Jockusch and Soare \cite{Jockusch-Soare:1972:CAD}---but that uses more familiar language of recursion theory, such as \emph{superlow sets}.

An imporant difference between the proof here and that of the $\RCAstar$ result in Section \ref{sec:3} is that whereas for $\RCAstar$ we used an \emph{external forcing} by taking a bijection between the definable formulas of the model $\CM$ and the true natural numbers $\omega$, and then performing an inductive construction in $\omega$, here we use an \emph{internal forcing} that refers only to $\CM$ (as we did in Lemma \ref{lem:2.6}). The internal approach has a number of advantages, including that $\CM$ need not be countable, and, in our case, that the generic path $P$ is first-order definable. We do not know whether an internal forcing is possible for $\RCAstar$.

Recall that $A$ is \emph{weak truth-table reducible} to $B$ (written $A \leq_{wtt} B$) if there is a computable function $f$ and a Turing reduction $A = \Phi^B_e$ such that $\Phi^B_{e,f(x)}(x) \halts$ for all $x$. A set $A$ is called \emph{low} if its jump satisfies $A' \leq_T 0'$, and \emph{superlow} if $A' \leq_{wtt} 0'$.
We are interested in superlow sets because of the following.
\begin{lemma}\label{lem:wtt}
 If $B\Sigma_1$ holds and $A \leq_{wtt} B$ and $B$ is regular, then $A$ is regular.
\end{lemma}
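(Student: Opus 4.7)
The plan is to show that for every $n \in M$, the initial segment $A \rest n$ is finite. Unpacking the definition of $\leq_{wtt}$, fix a total $\Delta_1$ function $f$ (with graph given by some $\Sigma_1$ formula $\psi(x,y)$) and an index $e$ such that $A(x) = \Phi_e^B(x)$ and $\Phi_{e,f(x)}^B(x) \halts$ for every $x$.

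First I would use $\BSo$ to collapse the pointwise use bound $f(x)$ to a single uniform bound on $[0,n)$. Since $f$ is total, $(\forall x < n)(\exists y)\,\psi(x,y)$ holds trivially, so $\BSo$ applied to this $\Sigma^0_1$ formula yields a single $y_0 \in M$ with $f(x) < y_0$ for every $x < n$. Regularity of $B$ then tells us that $B \rest y_0$ is finite, coded by some $\sigma \in M$ of length $y_0$ (available because $\exp$ lets us form $2^{y_0}$).

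The key observation is that for each $x < n$ the computation $\Phi_{e,f(x)}^B(x)$ queries $B$ only on inputs below $f(x) < y_0$, so $A(x) = \Phi_{e,f(x)}^\sigma(x)$ is determined by $\sigma$ alone. Hence $\{x < n : A(x) = 1\}$ is defined by a bounded $\Sigma^0_0$ formula in the parameters $\sigma, n, e, f$, and is therefore a finite object by $\Deltacomp$ (applied to a bounded $\Sigma^0_0$, hence $\Delta^0_1$, set). This gives $A \rest n$ finite, as desired.

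The only substantive step is the passage from the pointwise bound $f(x)$ to the uniform bound $y_0$, which is exactly what $\BSo$ is designed to supply. This is also what distinguishes $\leq_{wtt}$ from plain Turing reducibility in this lemma: a use function that was merely $\Sigma_1$ and partial, rather than total $\Delta_1$, could not be bounded this way by $\BSo$ alone, and indeed preservation of regularity fails for Turing reduction in general.
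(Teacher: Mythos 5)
Your proof is correct and follows essentially the same approach as the paper's: use $\BSo$ to turn the pointwise use bound $f(x)$ into a uniform bound on $[0,n)$, invoke regularity of $B$ to obtain the relevant initial segment of $B$ as a finite object, and observe that $A \rest n$ is then a bounded $\Delta^0_1$ set in a finite parameter, hence finite. The paper compresses the first step to ``let $s = \max_{y<x}f(y)$''; you have simply unpacked the $\BSo$ (plus $\exp$) reasoning that justifies that max exists.
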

\begin{proof}
 Let $f$ be the computable bound. Fix any $x$ and let $s = \max_{y < x} f(y)$. Then $A \rest x = \{y < x : \Phi^B_{e,s} \halts = 1\}$ is a finite set.
\end{proof}
The Superlow Basis Theorem states that every infinite computable tree has an infinite superlow path. Its proof is implicit in that of the original Low Basis Theorem of Jockusch and Soare \cite{Jockusch-Soare:1972:CAD}.
\begin{theorem}[Superlow Basis Theorem]
 If $(M,\{A\}) \models P^- + \exp + \ISo$ and $T$ is an infinite $\Delta^A_1$ tree, then $T$ contains an infinite path $P$ such that $P' \leq_{wtt}T'$.
\end{theorem}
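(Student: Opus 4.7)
The plan is to adapt the classical Jockusch--Soare construction of a low basis path \cite{Jockusch-Soare:1972:CAD} to this weaker setting, building $P$ externally to $(M,\{A\})$ as the leftmost infinite path through a decreasing sequence of $\Pi^T_1$ subtrees. Working relative to $T$ rather than $A$ (since the desired conclusion is phrased in terms of $T'$), define by simultaneous recursion trees $T=T_0\supseteq T_1\supseteq\cdots$ and a function $j:M\to\{0,1\}$: at stage $e$, put
\[
T^*_e := \{\sigma\in T_e : (\forall\tau\subseteq\sigma)\,\Phi^\tau_{e,|\tau|}(e)\nohalts\},
\]
and take $T_{e+1}:=T^*_e$ and $j(e):=0$ if $T^*_e$ still has strings of every length in $M$, otherwise $T_{e+1}:=T_e$ and $j(e):=1$.

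Next I would extract $P$ externally by recursion on $n\in M$: set $P(n):=0$ if $P\rest n\cdot 0$ extends to a string of length $m$ in $T_m$ for every $m$, and $P(n):=1$ otherwise. Using $\ISo$ relative to $A$ and the fact that each $T_e$ is closed under initial segments, verify the invariant ``$P\rest n$ extends to strings of every length in every $T_e$''; the inductive step exploits that if neither child of $P\rest n$ were extendible in some $T_{e_0}$ and $T_{e_1}$ respectively, one could pick a common level to contradict the inductive hypothesis. Conclude $P\rest n\in T_e$ for all $n,e\in M$. The standard dichotomy then yields $e\in P'\iff j(e)=1$: if $j(e)=0$ then every initial segment of $P$ lies in $T^*_e$ and $\Phi^P_e(e)\nohalts$; if $j(e)=1$ then $T^*_e$ has some height bound $N$, so that $P\rest N\in T_e\setminus T^*_e$ already witnesses $\Phi^P_e(e)\halts$.

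The main obstacle is to show $j\leq_{wtt} T'$; together with the previous paragraph this delivers the desired $P'\leq_{wtt}T'$. This requires careful bookkeeping of definability. By induction on $e$, show that ``$\sigma\in T_e$'' is uniformly $\Pi^T_1$ in $e$ and the parameter $j\rest e$, using Lemma \ref{lem:2.2}(i)-style closure. Combined with $\BSo$ to handle the bounded universal quantifier over strings of a fixed length, and the closure of $\Pi^T_1$ under unbounded universal number quantification, the predicate ``$T^*_e$ has strings of every length'' is uniformly $\Pi^T_1$ in $(e,j\rest e)$. Equivalently, ``$j(e)=1$'' is $\Sigma^T_1$ with a witness-index $i(e,j\rest e)$ computable uniformly from its arguments. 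To get a use bound depending on $e$ alone, set $i^*(e):=\max_{s\in 2^e}i(e,s)$, a total computable function since $\exp$ gives us the set $2^e$. An easy induction on $e$ then produces a computable $f$ such that $j(e)$ is determined by $T'\rest f(e)$, completing the proof.
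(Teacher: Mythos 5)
Your proof is correct and follows the same essential strategy as the paper's (Jockusch--Soare forcing with nested $\Pi_1$ classes, then bounding the use by ranging over all $2^e$ possible decision histories), but you make two worthwhile variations. First, you force only the jump-decision stages and then extract $P$ as the leftmost member of the resulting $\Pi_1$ class, whereas the paper interleaves even narrowing stages $T_{2e+2}$ so that the intersection is itself a single path; your version is a bit leaner, at the cost of $P$ no longer being first-order definable over the model. Second, you relativize the computations to $T$ rather than to the top set $A$, which matches the theorem's stated conclusion $P' \leq_{wtt} T'$ literally. Be aware, though, that the paper's proof actually relativizes to $A$ and establishes $(A\oplus P)' \leq_{wtt} A'$, which is what Corollary A.2 needs; your $P' \leq_{wtt} T'$ is formally what the displayed theorem says but is weaker than what the downstream corollary uses (the fix is to replace $\Phi^\tau_{e,|\tau|}$ by $\Phi^{A\oplus\tau}_{e,|\tau|}$ throughout and appeal to $T'\leq_{wtt}A'$). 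Two smaller points of rigor. Your extraction of $P$ by ``recursion on $n\in M$'' is problematic for nonstandard $M$ (which is not externally well-founded); replace it with the non-recursive description ``$P\rest n$ is the lexicographically least $\sigma\in 2^n$ such that for every $m$, $\sigma$ has an extension of length $m$ in $T_m$'', and then check coherence of these strings using the nestedness $T_0\supseteq T_1\supseteq\cdots$ and infinitude of each $T_e$. Likewise, ``by induction on $e$'' to establish the definability of $\sigma\in T_e$ should be replaced by the explicit uniform formula $\sigma\in T_e \iff \sigma\in T \wedge (\forall e'<e)\bigl[(j\rest e)(e')=0 \to (\forall\tau\subseteq\sigma)\Phi^\tau_{e',|\tau|}(e')\nohalts\bigr]$, which is in fact $\Delta^T_1$ (not merely $\Pi^T_1$) uniformly in $(e,j\rest e)$; an external induction on a nonstandard $e$ does not make sense, and the direct formula is what actually licenses the computable witness index $i(e,s)$. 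With these two repairs made explicit, your argument is sound.
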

\begin{proof}[Proof (after Jockusch--Soare).]
 Let $(\varphi^{A \oplus X}_e)_{e\in M}$ enumerate all $\Sigma_1$ formulae taking $A$ as a parameter and with a free second-order variable $X$. Adopt the convention that, if $\sigma$ is a finite binary string and $S \subseteq M$ is the finite set whose characteristic function is $\sigma$, then $\varphi^{A \oplus \sigma}_e$ holds iff $\varphi^{A \oplus B}_e$ holds without looking at $B(x)$ for any $x \geq |\sigma|$. (As in the proof of Proposition \ref{prop:2.4}, we can justify this measure by finding a computable upper bound on the $\Delta_0$ matrix of the $\Sigma_1$ formula.)
 Now, for each $e$, let $U_e = \{\sigma \in 2^{<M} : \neg \varphi^{A \oplus \sigma}_e$ holds$\}$, and define a sequence of infinite binary trees by recursion:

 $\begin{array}{l}
  T_0 = T \\
  T_{2e+1} = \left\{\begin{array}{l}
                    T_{2e} \cap U_s~\mathrm{if~this~set~is~infinite} \\
                    T_{2e} ~\mathrm{otherwise}
             \end{array}\right. \\
  T_{2e+2} = T_{2e+1} \cap \{\tau : \tau \supseteq \sigma\},~\mathrm{where}~|\sigma|=e~\mathrm{and}~\sigma~\mathrm{is~leftmost~s.t.~this~set~is~infinite.}
 \end{array}$

 Then in the limit we are left with a unique path $P$, and for each $e$ we have $e \in P'$ iff we chose $T_{2e+1}=T_{2e}$ at stage $2e+1$. In other words, each initial segment $P' \rest e$ is the rightmost $\sigma$ of length $e$ such that $T \cap \bigcap \{U_s : \sigma(s)=1\}$ is infinite. We can determine what this is using no more than $2^n$ predetermined queries to $A'$. Hence $P' \leq_{wtt} A'$.
\end{proof}
This gives the conservativity we wanted.
\begin{corollary}
 Fix $n \geq 1$, let $(M,\{A\}) \models P^- + \exp + \ISo$, and let $T$ be an infinite $\Delta_1^A$ tree.
 \begin{enumerate}
  \item \label{cor:A.2:ii}If $(M,\{A\}) \models \ISn$, there is an infinite path $P$ such that $M\cup \{A \oplus P\} \models \ISn$.
  \item \label{cor:A.2:i} If $(M,\{A\}) \models \BSnp$, there is an infinite path $P$ such that $M\cup \{A \oplus P\} \models \BSn$.
 \end{enumerate}
\end{corollary}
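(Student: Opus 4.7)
The plan is to derive both parts of the corollary directly from the Superlow Basis Theorem, by lifting the superlow reduction through the jump hierarchy and applying Lemma \ref{lem:wtt} at each level to transfer regularity. I would first apply the Superlow Basis Theorem to the infinite $\Delta^A_1$ tree $T$ to obtain an infinite path $P$ with $(A\oplus P)'\leq_{wtt}A'$; note that the ``$P'$'' produced in the proof of the theorem enumerates $\Sigma_1$ facts with $A$ hard-coded into the parameters, so the conclusion really concerns the full jump $(A\oplus P)'$ rather than the jump of $P$ in isolation.

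Next I would argue by induction on $k\ge 1$ that $(A\oplus P)^{(k)}\leq_{wtt}A^{(k)}$. The base case is the conclusion just obtained. For the inductive step, $(A\oplus P)^{(k)}\leq_{wtt}A^{(k)}$ entails $(A\oplus P)^{(k)}\leq_T A^{(k)}$, and the classical index-translation witnessing $X\leq_T Y\Rightarrow X'\leq_1 Y'$, which is available via Kleene's $T$-predicate formalism in the weak base theory $P^-+\exp+\ISo$, upgrades this to $(A\oplus P)^{(k+1)}\leq_1 A^{(k+1)}$, hence also $\leq_{wtt}$. For part (1), the hypothesis $(M,\{A\})\models\ISn$ combined with iterated Post (Proposition \ref{prop:2.4}) makes $A^{(k)}$ regular for every $k\le n$; Lemma \ref{lem:wtt} then transfers regularity to $(A\oplus P)^{(k)}$ for each such $k$. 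In particular the intermediate $(A\oplus P)^{(j)}$ being regular supplies $\ISo$ relative to $(A\oplus P)^{(j-1)}$ and thereby legitimises iterated Post inside $M\cup\{A\oplus P\}$ as well. Any $\Sigma_n^{A\oplus P}$ set is therefore $m$-reducible to the regular set $(A\oplus P)^{(n)}$, and $m$-reducibility preserves regularity under $\BSo+\exp$; hence $M\cup\{A\oplus P\}\models\ISn$. Part (2) then follows from part (1) via Paris--Kirby: $\BSnp\Rightarrow\ISn$ in the ground model, so part (1) yields $\ISn$ in the extension and a fortiori $\BSn$.

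The main obstacle I foresee is formalizing the jump-lift $X\leq_T Y\Rightarrow X'\leq_1 Y'$ uniformly in the weak base theory, and verifying that the iterated Post transformation can be bootstrapped step-by-step as the tower of regularities for the $(A\oplus P)^{(k)}$ is erected from the corresponding tower on the $A^{(k)}$-side. This is routine classically but needs care here to ensure the index translations, the use bounds carried by the $T$-predicate formalism, and the applications of Lemma \ref{lem:wtt} (with the right parameters) all go through without invoking extra induction. Once these formalization details are in place, the argument assembles mechanically from the Superlow Basis Theorem and Lemma \ref{lem:wtt}.
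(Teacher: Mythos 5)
Your proof follows a genuinely different route from the paper's, and for part~(2) the shortcut through part~(1) loses strength that the paper's direct argument retains.

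For part~(1), the paper never iterates the superlow reduction through the jump hierarchy. It uses only the base-level fact $(A\oplus P)'\leq_{wtt}A'$ (yielding $(A\oplus P)'$ regular, hence $\ISo$ in the extension), and then trades a given $\Sigma_{n+1}^{A\oplus P}$ set $B$ down and back up by Post's theorem: $B$ is $\Sigma_n^{(A\oplus P)'}$, hence $\Sigma_n^{A'}$ (since $(A\oplus P)'$ and $A'$ are each $\Delta_1$ in the other, with both regular), hence $\Sigma_{n+1}^A$, hence regular by $\ISnp$. Your version instead lifts the reduction to $(A\oplus P)^{(k)}\leq_{wtt}A^{(k)}$ for each $k\le n$ by iterating $X\leq_T Y\Rightarrow X'\leq_1 Y'$, transfers regularity level by level via Lemma~\ref{lem:wtt}, and then invokes iterated Post inside the extension. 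That does work, but it costs the formalization of the jump lift at each level (which you rightly flag as the main obstacle) and requires you to bootstrap iterated Post inside the extension; the paper's single trade avoids the iterated lift entirely and only touches Post in the extension at the first step $\Sigma_{n+1}^{A\oplus P}\Rightarrow\Sigma_n^{(A\oplus P)'}$, which the $\ISo$ base case already licences.

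For part~(2), your Paris--Kirby detour (descend from $\BSnp$ to $\ISn$ in the ground model, apply part~(1), pass from $\ISn$ to $\BSn$ in the extension) delivers only $\BSn$. The paper's direct argument gives more: for a $\Delta_{n+1}^{A\oplus P}$ set $B$, both $B$ and $\bar B$ are $\Sigma_{n+1}^{A}$ by the same trading, so $B$ is $\Delta_{n+1}^A$ and is regular by $\BSnp$; thus the extension satisfies $\BSnp$ (equivalently $\IDnp$), not merely $\BSn$. That stronger conclusion is what the Harrington--Simpson--Smith--Hajek lemma stated at the end of Section~\ref{sec:3} actually needs (it asserts $\BSn$-to-$\BSn$ preservation for all $n\geq 2$, i.e.\ $\BSnp\to\BSnp$); the printed conclusion $\BSn$ in the corollary appears to be a typo for $\BSnp$. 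You could recover the stronger form within your own framework by applying the iterated lift directly to a $\Delta_{n+1}^{A\oplus P}$ set rather than routing through part~(1), but the shortcut as written discards a level of bounding.
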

\begin{proof}
  Let $P$ be as given by the Superlow Basis Theorem. The proofs for the various levels of induction are as follows:

  (Case $\ISo$.) Since $(M,\{A\})$ satisfies $\ISo$, the set $A'$ is regular. Since $A'$ is regular and $(A \oplus P)' \leq_{wtt} A'$, the set $(A \oplus P)'$ is regular by Lemma \ref{lem:wtt}. Since $(A \oplus P)'$ is regular, $(M,\{A \oplus P\})$ satisfies $\ISo$.

  (Case $\ISnp$.) Fix any $\Sigma_{n+1}^{A \oplus P}$ set $B$. Then $B$ is $\Sigma_n^{(A \oplus P)'}$. Then $B$ is $\Sigma_n^{A'}$. Then $B$ is $\Sigma_{n+1}^A$. Therefore $B$ is regular by $\ISnp$.

  (Case $\BSnp$.) Fix any $\Delta_{n+1}^{A \oplus P}$ set $B$. Then both $B$ and $\bar B$ are $\Sigma_{n+1}^A$ as in the previous case, hence $B$ is $\Delta^A_{n+1}$, hence $B$ is regular by $\BSnp$.
\end{proof}

\section{Strengthening jump inversion in a countable model}\label{app:B}
Towsner in \cite{towsner:2015:OMC} has recently shown the following.
\begin{theorem}\label{thm:B}
 If $\CM \models \RCA$ is countable and $C \subseteq M$ is any set, there is a set $B \subseteq M$ such that $\CM [B] \models \RCA$ and $C$ is $\Delta_2(B)$ (in fact, $\Delta_1(B')$).
\end{theorem}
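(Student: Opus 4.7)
The plan is to combine the external forcing of Lemma~\ref{lem:3.2} with the Friedberg jump inversion of Lemma~\ref{lem:2.6}, using the countability of $\CM$ to dispense with the hypothesis that $\CM[A' \oplus C]$ satisfy $\BSo$. Fix $A \in \CS$ topping $\CM$ (possible by countability after discarding inessential sets), and plan to build $B \supseteq A$ so that $B' \equiv_T (A \oplus B)'$. Externally enumerate into a single $\omega$-list: all $\Sigma_1$ formulas $\psi_n^Y$ with parameters from $\CM$, all pairs $(n, a)$ with $a \in M$, all jump indices $e \in M$, and a $C$-coding schedule; this is countable because $\CM$ is.

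At external stage $s$ we extend commitments $\sigma_s \in 2^{<M}$ (for $B$) and $\tau_s$ (for $(A \oplus B)'$) by processing one requirement. The Friedberg jump-decision at $e$ searches in $\CM$ for $\sigma \supseteq \sigma_s$ with $\Phi^{A \oplus \sigma}_e(e)\halts$ and commits accordingly, and the coding step appends $C(k)$ at position $|\sigma_s|$. For the induction-preservation step at $(\psi_n, a)$ we extend $\sigma_s$ to a \emph{deciding} condition $\sigma_{s+1}$, meaning that for every $r < a$, either $\psi_n^{A \oplus \sigma_{s+1}}(r)$ already holds or no extension of $\sigma_{s+1}$ can ever make it hold. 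Existence of such a $\sigma_{s+1}$ is the key density lemma for the construction, proved inside $\CM$ by an internal recursion over the bounded $\Sigma^A_1$ set $R = \{r < a : (\exists \tau \supseteq \sigma_s)\psi_n^{A \oplus \tau}(r)\}$, which is finite by $\ISo$ with a uniform $\BSo$-bound $T$ on witness-lengths; iterating through $R$ and bounded-searching for extensions of controlled length keeps each step $\Delta^A_0$, so the whole argument lives inside $\ISo$.

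In the limit $B = \bigcup_s \sigma_s$ is regular; the Friedberg analysis gives $C \leqT B'$. For $\ISo$-preservation in $\CM[B]$: fix any $\psi_n$ and $a$; the stage handling $(n, a)$ produced $\sigma_{s+1} \subseteq B$ deciding $\psi_n$ on $[0, a)$, so $\{r < a : \psi_n^{A \oplus B}(r)\}$ equals the $\Delta^A_0(\sigma_{s+1}, a)$ set $\{r < a : \psi_n^{A \oplus \sigma_{s+1}}(r)\}$, hence is finite in $\CM[B]$. Therefore $\CM[B] \models \ISo$ and so $\CM[B] \models \RCA$. The main obstacle is the density lemma for deciding extensions: naively formulated it sits at $\Sigma^A_2$, beyond the $\ISo$ available in $\CM$; the resolution is the bounded internal recursion sketched above, which keeps each individual step within $\Delta^A_0$ so that the overall existence statement reduces to $\Sigma^A_1$-induction over the finite index set $R$. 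Dovetailing the three stage types over all countably many requirements then completes the proof.
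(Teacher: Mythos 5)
There is a genuine gap in the claim that one may ``Fix $A \in \CS$ topping $\CM$ (possible by countability after discarding inessential sets).'' A countable model of $\RCA$ need not be topped (e.g.\ a model whose second-order part is the arithmetical sets), and you cannot discard sets to create a top: the theorem demands $\CM[B] \models \RCA$, where $\CM[B]$ is the extension of the \emph{original} $\CM$, so every set originally in $\CS$ must still live in the final model. Restricting to a topped submodel $\CM_0 \subsetneq \CM$ and then proving $\CM_0[B] \models \RCA$ says nothing about $\CM[B]$. The paper handles exactly this point with Lemma \ref{lem:B.2}: every countable model of $\RCA$ can be \emph{extended} (not restricted) to a topped countable model of $\RCA$, by a Spector-style exact-pair construction with Shore blocking to keep the jump of the new top regular. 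That lemma does real work and is not derivable from countability alone; your proposal silently assumes its conclusion.

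Apart from that gap, your single-pass forcing (interleaving Friedberg jump-decision steps, $C$-coding steps, and Cohen-style $\Sigma_1$-deciding steps, with the deciding density justified by bounding $R = \{r < a : (\exists\tau\supseteq\sigma_s)\psi_n^{A\oplus\tau}(r)\}$ inside $\CM$) is essentially Towsner's original external forcing and is a reasonable alternative to the paper's factored route through Lemma \ref{lem:B.1} (code $C$ into a genuinely-finite-on-initial-segments regular $D$), the internal Friedberg Lemma \ref{lem:2.6}, and Lemma \ref{lem:B.2}. The paper's decomposition buys a short, modular proof once the topped case is in hand; a single forcing can work but must either (a) first invoke Lemma \ref{lem:B.2} to pass to a topped extension, or (b) run the deciding density requirements over \emph{all} pairs $(A,\psi_n,a)$ with $A$ ranging over the countably many sets in $\CS$, not just a single top. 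You do neither, so as written the argument only proves the theorem for models that happen to be topped.
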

 This differs from our own Lemma \ref{lem:2.6} (Friedberg jump inversion) in that it does not require $\CM$ to be topped or that $C$ satisfy any induction axioms; but it does require that $\CM$ be countable. In this section we give our own short proof of this theorem. We use Lemma \ref{lem:2.6} and a pair of new, if simple, lemmas: the first being an analogue of the Theorem but for $\Delta^0_1$ sets in place of $\Delta^0_2$, and the second showing that every countable model of $\RCA$ can be extended to a topped model.

 Both our proof and Towsner's use an external forcing and appear to require the countability of $\CM$. We do not know whether the assumption of countability can be relaxed.

 \begin{lemma}\label{lem:B.1}
  Let $\CM \models \RCAstar$ be countable and let $C \subseteq M$ be any set. Then there is a regular $D \subseteq M$ such that $C$ is $\Delta_1(D)$.
 \end{lemma}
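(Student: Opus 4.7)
My plan is to construct $D$ directly by coding each initial segment of $C$ into a single natural number in $M$. For each $n \in M$, the set $C \cap [0, n)$ is a bounded (possibly external) subset of $[0, n)$; since $\RCAstar$ proves $\exp$, the segment $[0, 2^n)$ is an initial segment of $M$, so there is a unique $c_n \in [0, 2^n)$ whose binary expansion is the characteristic function of $C \cap [0, n)$. Set $D = \{\langle n, c_n\rangle : n \in M\}$ using the standard pairing function; $D$ is a subset of $M$, though not necessarily an element of $\CM$.

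To see that $D$ is regular, fix any $N \in M$. Every element of $D \cap [0, N)$ has the form $\langle n, c_n\rangle$ with $n, c_n < N$, so $D \cap [0, N) \subseteq [0, N)$, and by $\exp$ again this initial segment has a canonical code in $[0, 2^N) \subseteq M$. To see $C$ is $\Delta_1(D)$, observe that for every $x \in M$ we have $x \in C$ iff $(\exists c < 2^{x+1})[\langle x+1, c\rangle \in D$ and the $x$-th binary digit of $c$ equals $1]$, while $x \notin C$ iff $(\exists c < 2^{x+1})[\langle x+1, c\rangle \in D$ and the $x$-th binary digit of $c$ equals $0]$. Both are $\Sigma_1(D)$ formulas, and exactly one of them holds for each $x$ because $c_{x+1}$ is uniquely determined. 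Hence $C$ is $\Delta_1$ with parameter $D$.

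The only delicate point is the very first one, namely that for every $n \in M$ the external set $C \cap [0, n)$ corresponds to a specific element $c_n$ of $M$; this is where $\exp$ is essential, as it guarantees that every bounded (possibly external) subset of $M$ has a canonical code inside $M$. Beyond that the construction needs neither induction nor forcing, and the countability of $\CM$ is not actually used in this lemma. I expect that hypothesis is carried along for convenience, to be invoked when Lemma \ref{lem:B.1} is later combined with Lemma \ref{lem:2.6} and an external forcing argument in the proof of Theorem \ref{thm:B}.
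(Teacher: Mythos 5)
Your proof has a genuine gap at the very first step, and it is the one you flagged as ``delicate.'' You assert that for every $n\in M$, the external set $C\cap[0,n)$ ``corresponds to a specific element $c_n$ of $M$'' because $\exp$ holds. But $\exp$ only guarantees that the number $2^n$ exists in $M$; it does \emph{not} guarantee that every subset of $[0,n)$ has a code in $[0,2^n)$. The elements of $[0,2^n)$ code exactly the $M$-finite subsets of $[0,n)$, and when $n$ is nonstandard and $M$ is countable there are uncountably many subsets of $[0,n)$ but only countably many with codes. In fact, the assertion that $C\cap[0,n)$ has a code for \emph{every} $n\in M$ is precisely the definition of $C$ being regular, so you are assuming exactly the property the lemma is designed to create. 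For a genuinely arbitrary $C\subseteq M$ (which is what the lemma allows), your $c_n$ simply does not exist, and the construction collapses. The same gap reappears in your regularity argument for $D$: you again infer ``has a canonical code'' from ``$2^N$ exists,'' which is not valid.

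Relatedly, your closing remark that countability is not used here is incorrect. Countability is essential. The paper's construction uses an external enumeration $(a_n)_{n\in\omega}$ of $M$ and places codewords $\langle a_n,s_n,i_n\rangle$ (with $i_n$ recording $C(a_n)$) along an externally chosen increasing cofinal sequence, so that below any bound $D$ has only \emph{genuinely} finitely many elements (cardinality $<\omega$). This is what makes $D$ regular without assuming anything about $C$, and it cannot be done for an arbitrary $M$ of uncountable cardinality by this method. Your approach, by contrast, would need $D$ to contain one codeword $\langle n,c_n\rangle$ per $n\in M$, and there is no way to guarantee the initial segments of such a $D$ are coded.
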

 \begin{proof}
  Working outside of the model, let $(a_n)_{n \in \omega}$ be an enumeration of all elements of $M$. We let $D$ consist of triples of the form $\langle a_n, s_n, i_n\rangle$, where $i_n \in \{0,1\}$ with $i_n = 1$ iff $a \in C$, and the sequence $(\langle a_n,s_n,i_n\rangle)_{n \in \omega}$ is increasing and cofinal in $M$. Then $C$ is $\Delta_1(D)$, since $a \in C$ iff $(\exists s)\langle a,s,1\rangle \in D$ and $a \not \in C$ iff $(\exists s)\langle a,s,0\rangle \in D$. Furthermore, not only is each $D \rest n$ finite in the sense of Section \ref{sec:1.2}, but $D \rest n$ actually has cardinality $< \omega$. Hence $D$ is regular.
 \end{proof}
 For any regular set $A \subseteq M$, the join $A \oplus D$ is also regular. We prove Towsner's theorem first in a restricted form.
 \begin{proof}[Proof of Theorem \ref{thm:B}, when $\CM$ is topped.] Let $\CM, C$ be as in the theorem statement, and suppose $\CM$ is topped by $A$. Construct $D$ as in Lemma \ref{lem:B.1}. Then the join $A' \oplus D$ is regular. Apply Lemma \ref{lem:2.6} to obtain $B$ such that $\CM[B] \models \ISo$ and $D \leq_{wtt} (A \oplus B)'$ (not stated in the Lemma but implicit in its proof; $\leq_{wtt}$ is defined in Appendix \ref{app:A}), so that $C$ is $\Delta_1(B')$.
 \end{proof}
The more general case follows from this and the following.
\begin{lemma} \label{lem:B.2}
 If $\CM$ is a countable model of $\RCA$ then it can be extended to a topped countable model of $\RCA$.
\end{lemma}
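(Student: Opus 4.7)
The plan is to externally construct a single set $B \subseteq M$ for which every $A \in \CS$ is $\Delta_1^B$ and $\CM \cup \{B\} \models \ISo$. Proposition~\ref{prop:2.3}(i) will then yield the desired topped countable extension $\CM[B] \models \RCA$.

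Since $\CM$ is countable, I would enumerate $\CS$ externally as $(A_n)_{n<\omega}$ and consider the $\leq_T$-increasing chain $C_n := A_0 \oplus \cdots \oplus A_n \in \CS$. I build $B$ stagewise. At stage $k$, following the sparsification idea of Lemma~\ref{lem:B.1}, I commit the next bit of $B$ at a position $t_k \in M$ larger than all previous commitments and cofinal in $M$, where the tag at $t_k$ records one further bit of an external coding of the sequence $(C_n)_n$. Between consecutive committed positions I interleave a Cohen-style forcing in the spirit of Lemma~\ref{lem:3.2}: extend the approximation by enough padding to resolve the next $\Sigma_1$-formula $\psi_k^X(x,y)$ from a fixed enumeration, either killing $\psi_k^B(x)$ for some $x < m_k$ (for a fixed enumeration $(m_k)$ of $M$), or forcing all witnesses to be uniformly bounded by some $N \in M$ via $\BSo$ in $\CM$. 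The sparsity of the commitments leaves arbitrarily long blocks of free bits between them for the forcing to act. In the limit, $B$ is regular (each $B \rest m$ meets externally-finitely-many committed tags), every $A_n$ is $\Delta_1^B$ (a uniform $B$-search for the tag encoding the desired bit works), and the forcing ensures $\CM \cup \{B\} \models \BSo$, hence $\ISo$. Proposition~\ref{prop:2.3}(i) then yields $\CM[B] \models \RCA$; and it is topped by $B$ because every $X \in \CM[B]$ is $\Delta_1^{E \oplus B}$ for some $E \in \CS \subseteq \Delta_1^B$.

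The main obstacle is the verification that $\CM \cup \{B\} \models \ISo$. A $\Sigma_1^B$-formula can in principle inspect which positions of $M$ carry committed tags of $B$ and so detect external standard-finite structure of the enumeration; naively, this could produce a bounded $\Sigma_1^B$ set (for instance, an intersection of the standard integers with a $\CM$-definable condition) that is not coded in $M$, violating $\ISo$. Overcoming this requires choosing the commitment positions $(t_k)$ carefully so that the interleaved Cohen forcing can systematically defeat every such $\Sigma_1^B$-formula, rendering $B$ generic enough that its coded structure is invisible to $\Sigma_1$ queries. Showing that the coding and forcing requirements can be satisfied simultaneously is the delicate heart of the proof.
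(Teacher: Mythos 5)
There is a genuine gap, and it sits exactly where you flag the ``delicate heart''. First, the inference ``$\CM \cup \{B\} \models \BSo$, hence $\ISo$'' is not valid: $\BSo$ does not imply $\ISo$ (indeed, $\RCAstar$ proves $\BSo$ but not $\ISo$). Second, the dense requirements you lift from Lemma~\ref{lem:3.2} --- resolve one $\Sigma_1$-formula $\psi_k$ at a time by either killing it below $m_k$ or bounding its witnesses --- are tailored to preserve exactly $\BSo$; they do not arrange that, for every $m \in M$, some single $M$-finite condition decides the whole block $B' \rest m$, which is what regularity of $B'$ (and hence $\ISo$ in the topped extension $\CM[B]$, where every $\Sigma^0_1$ set is $m$-reducible to $B'$) amounts to. Your proposed remedy --- choosing the commitment positions so the external indexing is ``invisible to $\Sigma_1$ queries'' --- does not address this, because the obstacle is not the visibility of external structure but the absence of any step committing to a full $M$-finite block of $B'$.

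The paper closes the gap by replacing the Cohen-style requirements with Shore blocking, in the spirit of Spector's exact pair construction: fix an increasing cofinal $(q_n)_{n < \omega}$ in $M$, code $A_n$ into the column $B^{[q_n]}$, and at each stage extend the current condition $\beta_n$ by $M$-finitely many elements to a $\delta$ maximizing $\{e < q_n : \Phi_e^\delta(e)\halts\}$; this finite condition decides $B' \rest q_n$ outright. Cofinality of $(q_n)$ then makes $B'$ regular, so $\CM[B] \models \ISo$, while the column coding makes each $A_n$ $\Delta_1^B$ and hence tops the model by $B$. To salvage your version you would need to replace the $\BSo$-style requirements with ``decide $B' \rest q_n$'' requirements and verify their satisfiability using $\ISo$ in $\CM$ relative to $A_0 \oplus \cdots \oplus A_{n-1}$, exactly as the paper does.
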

\begin{proof}
 We imitate Spector's construction \cite{spector:56:OTD} of an exact pair for an ideal of Turing degrees, producing one set instead of two and using a type of Shore blocking to make sure it satisfies $\ISo$. Let $(A_n)_{n \in \omega}$ be an enumeration of all sets which are $\Delta^0_1$ in $\CM$, and let $(q_n)_{n \in \omega}$ be an increasing cofinal sequence in $M$ with $q_0 = 0$. It is enough to produce a set $B$ such that each $A_n$ is one-one reducible to $B$, and $\CM[B] \models \ISo$. If $B^{[y]}$ denotes the row $\{x : \langle x,y\rangle \in B\}$ and $=^*$ denotes equality modulo finite disagreement, our strategy will end with $B^{[q_n]} =^* A_n$ for all $q_n$ and $B^{[y]} =^* \emptyset$ for all other $y$, with a certain amount of forcing to ensure that $B'$ is regular. We build $B$ as a limit of partial approximations $(\beta_n)_{n \in \omega}$.

 Stage 0: Begin with $\beta_0 = \emptyset$.

 Stage $n+1$. Suppose the columns $\beta_n^{[y]}$ have been fully decided for all $y < q_n$, as well as some finite set of elements from other columns. Suppose also, by induction, that $\beta_n$ is $\Delta_1(A_0 \oplus \cdots \oplus A_{n-1})$. Let $\delta$ be an extension of $\beta_n$ (by finitely many elements) which maximizes $\{e < q_n : \Phi_e^\delta(e) \halts\}$; such a $\delta$ exists because $\ISo$ holds relative to $A_0 \oplus \cdots \oplus A_{n-1}$. Extend this $\delta$ to $\beta_{n+1}$ by setting the remaining elements of $\beta_{n+1}^{[q_n]}$ to equal those of $A_n$, and the remaining elements of $\beta^{[y]}_{n+1}$ to zero for each $y < q_{n+1}$. Proceed to the next stage.

 Now let $B$ be the limiting set $B=\bigcup_{n \in \omega} A_n$. Then each $A_n$ is coded into $B$, and both $B$ and its jump $B'$ are regular.
\end{proof}
By use of a more complicated forcing, Towsner is able to generalize the theorem, making a given $C$ become $\Delta^0_{n+1}$ while preserving $\RCA + \ISn$ for any fixed $n$. When we try to adapt the present proof to arbitrary $\ISn$, we encounter no trouble in Lemmas \ref{lem:2.6} and \ref{lem:B.1} but difficulties do arise in Lemma \ref{lem:B.2}, namely, if $\CM$ is nonstandard then $\CM[A_\omega]$ as constructed does not satsify $\ISt$, because the cofinal set $\{q_n\}_{n \in \omega}$ is $\Sigma_2(A)$ in general.  We would like to know whether Lemma \ref{lem:B.2} can be improved to work for all $\RCA + \ISn$.

\bibliography{./citations-coh}

\begin{thebibliography}{10}

\bibitem{avigad:96:FFA}
Jeremy Avigad.
\newblock Formalizing forcing arguments in subsystems of second-order
  arithmetic.
\newblock {\em Ann. Pure Appl. Logic}, 82(2):165--191, 1996.

\bibitem{CJS}
Peter~A. Cholak, Carl~G. Jockusch, and Theodore~A. Slaman.
\newblock On the strength of {R}amsey's theorem for pairs.
\newblock {\em J. Symbolic Logic}, 66(1):1--55, 2001.

\bibitem{CLY:2010:OTR}
C.~T. Chong, Steffen Lempp, and Yue Yang.
\newblock On the role of the collection principle for {$\Sigma^0_2$}-formulas
  in second-order reverse mathematics.
\newblock {\em Proc. Amer. Math. Soc.}, 138(3):1093--1100, 2010.

\bibitem{CLY:2014:survey}
C.~T. Chong, Wei Li, and Yue Yang.
\newblock Nonstandard models in recursion theory and reverse mathematics.
\newblock {\em Bull. Symb. Log.}, 20(2):170--200, 2014.

\bibitem{CSY:2012}
C.~T. Chong, Theodore~A. Slaman, and Yue Yang.
\newblock {$\Pi^1_1$}-conservation of combinatorial principles weaker than
  {R}amsey's theorem for pairs.
\newblock {\em Adv. Math.}, 230(3):1060--1077, 2012.

\bibitem{FSS:1983:CAA}
Harvey~M. Friedman, Stephen~G. Simpson, and Rick~L. Smith.
\newblock Countable algebra and set existence axioms.
\newblock {\em Ann. Pure Appl. Logic}, 25(2):141--181, 1983.

\bibitem{hajek:93:IAF}
Petr H\'{a}jek.
\newblock Interpretability and fragments of arithmetic.
\newblock In {\em Arithmetic, proof theory, and computational complexity
  ({P}rague, 1991)}, volume~23 of {\em Oxford Logic Guides}, pages 185--196.
  Oxford Univ. Press, New York, 1993.

\bibitem{Hajek-Pudlak:1998:Metamathematics_of_first_order_arithmetic}
Petr H\'{a}jek and Pavel Pudl\'{a}k.
\newblock {\em Metamathematics of first-order arithmetic}.
\newblock Perspectives in Mathematical Logic. Springer-Verlag, Berlin, 1998.
\newblock Second printing.

\bibitem{harrington:77}
Leo Harrington.
\newblock Unpublished note.
\newblock 1977.

\bibitem{JS:93:ACS}
Carl Jockusch and Frank Stephan.
\newblock A cohesive set which is not high.
\newblock {\em Math. Logic Quart.}, 39(4):515--530, 1993.

\bibitem{Jockusch-Soare:1972:CAD}
Carl~G. Jockusch, Jr. and Robert~I. Soare.
\newblock {$\Pi ^{0}_{1}$} classes and degrees of theories.
\newblock {\em Trans. Amer. Math. Soc.}, 173:33--56, 1972.

\bibitem{KKY:2021:HSI}
Leszek~Aleksander Kołodziejczyk, Katarzyna~W. Kowalik, and Keita Yokoyama.
\newblock How strong is {R}amsey's theorem if infinity can be weak? {\tt
  https://arxiv.org/abs/2011.02550}, 2020.

\bibitem{Mileti:thesis}
Joseph~Roy Mileti.
\newblock {\em Partition theorems and computability theory}.
\newblock ProQuest LLC, Ann Arbor, MI, 2004.
\newblock Thesis (Ph.D.)--University of Illinois at Urbana-Champaign.

\bibitem{PK:78:CSI}
J.~B. Paris and L.~A.~S. Kirby.
\newblock {$\Sigma _{n}$}-collection schemas in arithmetic.
\newblock In {\em Logic {C}olloquium '77 ({P}roc. {C}onf., {W}roc\l aw, 1977)},
  volume~96 of {\em Studies in Logic and the Foundations of Mathematics}, pages
  199--209. North-Holland, Amsterdam-New York, 1978.

\bibitem{Simpson:2009:Subsystems_of_Second_Order_Arithmetic}
Stephen~G. Simpson.
\newblock {\em Subsystems of second order arithmetic}.
\newblock Perspectives in Logic. Cambridge University Press, Cambridge;
  Association for Symbolic Logic, Poughkeepsie, NY, second edition, 2009.

\bibitem{SS:1986:FOP}
Stephen~G. Simpson and Rick~L. Smith.
\newblock Factorization of polynomials and {$\Sigma^0_1$} induction.
\newblock volume~31, pages 289--306. 1986.
\newblock Special issue: second Southeast Asian logic conference (Bangkok,
  1984).

\bibitem{slaman:2004}
Theodore~A. Slaman.
\newblock {$\Sigma_n$}-bounding and {$\Delta_n$}-induction.
\newblock {\em Proc. Amer. Math. Soc.}, 132(8):2449--2456, 2004.

\bibitem{spector:56:OTD}
Clifford Spector.
\newblock On degrees of recursive unsolvability.
\newblock {\em Ann. of Math. (2)}, 64:581--592, 1956.

\bibitem{towsner:2015:OMC}
Henry Towsner.
\newblock On maximum conservative extensions.
\newblock {\em Computability}, 4(1):57--68, 2015.

\bibitem{wong:2017:IWK}
Tin~Lok Wong.
\newblock Interpreting weak {K}\"{o}nig's lemma using the arithmetized
  completeness theorem.
\newblock {\em Proc. Amer. Math. Soc.}, 144(9):4021--4024, 2016.

\end{thebibliography}
\bibliographystyle{plain}

\end{document}